\def\mr{\mathrm}
\def\mbR{\mathbb{R}}
\def\mbZ{\mathbb{Z}}
\def\mbD{\mathbb{D}}
\newcommand{\mc}{\mathcal}
\newcommand{\mcR}{\mathcal{R}}
\theoremstyle{plain}
\newtheorem{theorem}{Theorem}[section]
\newtheorem{corollary}[theorem]{Corollary}
\newtheorem{proposition}[theorem]{Proposition}
\newtheorem{lemma}[theorem]{Lemma}
\theoremstyle{definition}
\newtheorem{definition}[theorem]{Definition}
\newtheorem{remark}[theorem]{Remark}
\newtheorem{example}[theorem]{Example}
\numberwithin{equation}{section}
\begin{document}

\textcolor[rgb]{0,0,1}{
}

\title[Geometry of dyadic polygons I] {Geometry of dyadic polygons I:\\ The structure of dyadic triangles}

\author[Mu\'{c}ka]{A. Mu\'{c}ka$^1$}
\address{$^1$ Faculty of Mathematics and Information Sciences\\
Warsaw University of Technology\\
00-661 Warsaw, Poland}

\author[Romanowska]{A.B. Romanowska$^2$}
\address{$^2$ Faculty of Mathematics and Information Sciences\\
Warsaw University of Technology\\
00-661 Warsaw, Poland}

\email{$^1$Anna.Mucka@pw.edu.pl\phantom{,}}
\email{$^2$Anna.Romanowska@pw.edu.pl\phantom{,}}

\keywords{dyadic rational numbers, dyadic affine space, dyadic convex set, dyadic polygon,
dyadic simplex, commutative binary mode}

\subjclass[2010]{20N02, 08A05, 52B11, 52A01}

\date{September, 2025}

\begin{abstract}

Dyadic rationals are rationals whose denominator is a power of $2$.  \emph{A dyadic
$n$-dimensional convex set} is defined as the intersection with $n$-dimensional dyadic
space of an $n$-dimensional real convex set.   Such a dyadic convex set is said to be
a \emph{dyadic $n$-dimensional polytope} if the real convex set is a polytope whose vertices
lie in the dyadic space.
Dyadic convex sets are described as subalgebras of reducts of faithful affine
spaces over the ring of dyadic numbers, or equivalently as commutative, entropic and
idempotent groupoids (binars or magmas) under the binary operation of arithmetic mean.

This paper investigates the structure of dyadic polygons (two-dimensional polytopes), in particular dyadic triangles, following earlier results of~\cite{MRS11} and related papers. A new classification of dyadic triangles is provided. In addition, dyadic triangles with a pointed vertex are characterized.

\end{abstract}

\maketitle

%\tableofcontents

\section{Introduction}

This paper is devoted to the study of dyadic polygons: mathematical objects that belong to geometry as subsets of certain affine spaces and to algebra as sets with one binary  idempotent, commutative and entropic operation. It may be considered as a continuation of earlier work from \cite{MRS11}.

Recall that dyadic rationals are rationals whose denominator is a power of $2$. They form the
principal ideal subdomain $\mbD = \mbZ[1/2]$ of the ring $\mbR$ of real numbers. The affine
spaces of interest are idempotent reducts of faithful $\mbD$-modules (modules over
$\mbD$). They are called \emph{affine $\mbD$-spaces} and are subreducts (subalgebras of reducts) of affine $\mbR$-spaces, i.e. affine spaces over the ring $\mbR$ of real numbers.

A subset $D$ of $\mbD^n$ is called a \emph{geometric dyadic convex set}
if it is the intersection of a convex subset of $\mbR^n$ with the space $\mbD^n$. Such sets are also called \emph{convex relative to $\mbD$} (cf. for instance \cite{B05}.) \emph{Dyadic polytopes} are dyadic convex sets obtained as the intersection of dyadic spaces and real (convex) polytopes whose vertices are contained in the dyadic space. \emph{Dyadic polygons} are the intersection of the dyadic
plane and real polygons with vertices in the dyadic plane. The one-dimensional analogs are
\emph{dyadic intervals} (considered as intervals with their ends).

Real convex sets are described algebraically as certain barycentric algebras, subsets of
$\mbR^n$ closed under the operations of weighted means with weights from the open unit interval
$I^{\circ}=\ ]0,1[$. See e.g. \cite{RS85, RS02}. Similarly, dyadic convex sets may be described as subsets of $\mbD^n$
closed under weighted means with weights from the open dyadic unit interval $I^{\circ} \cap
\mbD$. However, as shown already by Je\v{z}ek and Kepka \cite{JK76}, the
operations determined by $I^{\circ} \cap \mbD$ are all generated by the single arithmetic mean
operation
\[
x \circ y := xy \underline{1/2} = \frac{1}{2}(x + y).
\]
The arithmetic mean is a commutative, idempotent and entropic operation.
This fact allows one to define dyadic convex sets equivalently as algebras with one basic binary operation (groupoids, binars or magmas) with the algebraic structure of so-called \emph{commutative binary modes}
($\mc{CB}$-modes). Such algebras have a well developed theory. (See e.g. \cite{JK75, JK76, JK83, MMR19, MMR19a,
MMR23, MR04, MRS11, MR16, RS85, RS02}).

While all (closed) real intervals are isomorphic as ba\-ry\-cen\-tric algebras, and each is generated by
its ends, there are infinitely many pairwise non-isomorphic dyadic intervals (considered as
$\mc{CB}$-modes), classified first in~\cite{MRS11}. In particular,
each non-trivial interval of $\mbD$ is isomorphic to some dyadic interval $\mbD_k = [0,k]$,
where $k$ is an odd positive integer. (See also Section~\ref{S:Triangles}.)

Dyadic triangles were characterised in~\cite{MRS11}, with some
improvements given in~\cite{MMR19a} and~\cite{MMR23}. Roughly speaking, each dyadic triangle of the plane $\mbD^2$ is isomorphic to one of three types of triangles located in the first quadrant of the plane, with vertices having integral coordinates, and one (pointed) vertex located at the origin. (See Theorem~\ref{T:classif} of Section~\ref{S:Triangles}.) The description provided in Theorem~\ref{T:classif} depends on the choice of a vertex at the origin, which was not pointed out clearly enough in the original paper.
This fact,  and a mistake in Lemma~4.11 of~\cite{MRS11}, resulted in an incorrect formulation of one of consequences of this classification, Theorem~6.6, concerning characterization of dyadic triangles by means of certain quadruples of integers.
In this paper we provide a much simpler and clearer description of dyadic triangles with one pointed vertex. We show that each is isomorphic to a certain special dyadic triangle we call \emph{representative}. In consequence, we get a classification of all dyadic triangles with a pointed vertex, and their characterization by means of certain triples of integers. These results essentially improve and correct the earlier results. A subsequent paper will deal with isomorphisms of pointed representative triangles.   Because each dyadic polygon can be decomposed as a union of dyadic triangles whose interiors do not intersect, a good description of dyadic triangles is essential to an understanding of the structure of all dyadic polygons.
Since each dyadic polygon is isomorphic to a polygon with vertices having integral coordinates, its real convex hull is a \emph{lattice polygon} (cf. \cite{PRS}, \cite{GH}). Thus, a classification of dyadic convex polygons brings new insights to the study of lattice polygons.

This introductory work on triangles and polygons is intended as a preparation for a more extensive study of general dyadic polytopes.

Dyadic geometry is more than
a fascinating field of study from the mathematical point of view. As formulated in~\cite{MRS11}: ``Since its points are coordinatized by finite binary expressions, it is the precise geometry that is represented in a digital computer. On the other hand, it provides a good model for a space with holes.''

This paper is the first of two parts
concerning the structure and properties of dyadic polygons considered as $\mc{CB}$-modes. The second part will include a characterization of isomorphism types of dyadic triangles.
%and the third will deal with triangulations of dyadic polygons and dyadic barycentric %coordinates.

The current paper is arranged as follows.
After some preliminaries on notation, Section~\ref{S:2}  provides a background necessary to understand the two parts of the paper.
It contains a summary of basic known results concerning dyadic affine spaces and dyadic convex sets in general, and dyadic polytopes in particular. Section~\ref{S:Triangles} provides an older classification of dyadic triangles.
Finally Sections~\ref{S:ClassifTriangles} and~\ref{S:5} contain the main results:  a new classification (Theorem~\ref{T:reprhats}) and a new characterization (Theorem~\ref{T:charactdyadtrs})   of pointed dyadic triangles.

All dyadic convex sets are considered in this paper as $\mathcal{CB}$-modes.
We use notation, terminology and conventions similar to
those of \cite{RS02} and the previously referenced papers. In particular, the reader may
wish to consult the papers \cite{CR13}, \cite{MMR19}, \cite{MMR19a}, \cite{MMR23} and
\cite{MRS11}. For more details and information on affine spaces, convex sets and barycentric
algebras, we also refer the reader to the monographs \cite{RS85, RS02} and the new survey
\cite{R18}. For convex polytopes, see \cite{AB83, BG03, Z95}.

\section{Dyadic convex sets}\label{S:2}

\subsection{Dyadic affine spaces}\label{S:2.1}

Recall that affine spaces over a unital subring $R$ of the ring $\mbR$ (\emph{affine $R$-spaces}) can be considered as the reducts $(A,\underline{R})$ of $R$-modules $(A,+,R)$,
where $\underline{R}$ is the set of binary affine combinations
\begin{equation}\label{E:afoper}
ab\,\underline{r} = a(1-r)+br \,
\end{equation}
for all $r \in R$ and $a,b \in A$. (See~\cite[\S5.3, \S6.3]{RS02}.) The class of all affine $R$-spaces forms a \emph{variety} of algebras (the class of all algebras satisfying a given set of identities) \cite{C75}. In this paper, the ring $R$ is the ring $\mbR$ of real numbers or the ring $\mbD$ of dyadic rational numbers. In particular,
affine spaces over the ring $\mbD$ (\emph{affine $\mbD$-spaces}) are considered here as algebras $(A,\underline{\mbD})$ with the set $\underline{\mbD} = \{\underline{d} \mid d \in \mbD\}$ of basic operations.

We are interested in affine $\mbD$-spaces from the quasivariety
$\mathsf{Q}_a(\mbD)$ (the class of all algebras satisfying a given set of quasi-identities) of affine $\mbD$-spaces, generated by the affine $\mbD$-space $\mbD$. They are all faithful (the operations $\underline{d}$ and $\underline{e}$ are different for distinct $d, e \in \mbD$). Each $n$-dimensional member of $\mathsf{Q}_a(\mbD)$ is a free affine $\mbD$-space on $n+1$ generators, and is isomorphic to the affine $\mbD$-space $\mbD^n$. (See e.g.~\cite{MMR23}.)

Note that, unlike the case of the affine $\mbR$-space $\mbR$, the affine $\mbD$-space $\mbD$
contains infinitely many pairwise distinct (though isomorphic) affine $\mbD$-subspaces.
Nontrivial subspaces of the affine $\mbD$-space $\mbD$ have the form
\[
m \mbD = \{md \mid d \in \mbD\}
\]
for some positive odd integer $m$. (Note that
%$2^k \mbD = \mbD$ and
$2^k (2l + 1) \mbD = (2l + 1) \mbD$ for $k,l\in\mathbb N$.)

For a finite-dimensional affine $\mbD$-space $A$ from $\mathsf{Q}_a(\mbD)$, the \emph{affine
$\mbR$-hull} $\mr{aff}_{\mbR}(A)$ of $A$ is the intersection of all affine $\mbR$-spaces
containing~$A$. The intersection $\mr{aff}_{\mbR}(A) \cap \mbD^n$ is the \emph{affine
$\mbD$-hull} $\mr{aff}_{\mbD}(A)$ of $A$.

We are mainly interested in affine dyadic spaces with a certain density property, defined as follows. An $n$-dimensional subspace $A$ of $\mbD^n$ is called \emph{geometric} precisely when it coincides with its affine $\mbD$-hull, i.e. when
\[
A = \mr{aff}_{\mbR}(A) \cap \mbD^n = \mr{aff}_{\mbD}(A).
\]
For example, while the affine $\mbD$-space $\mbD$ is geometric, its subspace $3\mbD$ is not.

Recall as well that automorphisms of the affine $\mbD$-space $\mbD^n$ form the $n$-dimensional
affine group $\mathrm{GA}(n,\mbD)$ over the ring $\mbD$, the group generated by the linear group $\mathrm{GL}(n,\mbD)$ and the group of translations of the space $\mbD^n$.   The elements of $\mathrm{GL}(n,\mbD)$ may be thought of as matrices with dyadic entries and determinants equal to $\pm 2^{k}$ for some integer $k$.

\subsection{Dyadic convex sets}

\begin{definition}\cite{CR13, MMR23}
A subgroupoid $(B, \circ)$ of the reduct $(A, \circ)$ of an affine $\mbD$-space $A$ from
$\mathsf{Q}_a(\mbD)$ (i.e. a $\circ$-subreduct $B$ of $A$) is called a \emph{dyadic convex
subset of} $A$ (or briefly a \emph{dyadic convex set}).
\end{definition}

Note that dyadic convex sets defined as above were called algebraic dyadic convex sets
in~\cite{CR13, MMR23}.

Isomorphic copies of dyadic convex sets form a (minimal) subquasivariety $\mathsf{Q}_c(\mbD)$ of the variety $\mc{CBM}$ of commutative binary modes~\cite[\S3]{MR04}.

\begin{definition}\cite{CR13, MRS11}
A dyadic convex subset of $\mbD^n$ is \emph{geometric}
if it is the intersection of a convex subset $C$ of $\mbR^n$  with the subspace~$\mbD^n$.
\end{definition}

Examples of dyadic convex sets which are not geometric are given by the $\circ$-subreducts of
the affine $\mbD$-spaces $(2m+1) \mbD$ for positive $m$.

If $C \in \mathsf{Q}_c(\mbD)$, then the \emph{affine $\mbD$-hull} $\mr{aff}_{\mbD}(C)$ of $C$ is
the intersection of all affine $\mbD$-spaces containing $C$. If $\mr{aff}_{\mbD}(C)$
is of finite dimension $n$, then we say that $C$ is
\emph{finite-dimensional}, with \emph{dimension} $\dim(C)=n$.

If $C$ is a dyadic convex subset of an affine $\mbD$-space $\mbD^n$, then the \emph{convex
$\mbR$-hull} $\mr{conv}_{\mbR}(C)$ of $C$ in $\mbR^n$ is the intersection of all convex subsets
of the affine $\mbR$-space $\mbR^n$ containing $C$. Then $\mr{conv}_{\mbR}(C)$ may be considered
as the subalgebra generated by $C$ in the real convex set $(\mbR^n, \underline{I}^\circ)$.The
\emph{convex $\mbD$-hull} ~$\mr{conv}_{\mbD}(C)$ of $C$ in $\mbD^n$ is the intersection of
$\mr{conv}_{\mbR}(C)$ with $\mbD^n$,
\begin{equation}\label{E:convhull}
\mr{conv}_{\mbD}(C) = \mr{conv}_{\mbR}(C) \cap \mbD^n,
\end{equation}
and is obviously geometric. An $n$-dimensional convex subset $C$  of an affine space
$\mbD^n$, where $n$ is a positive integer, is a geometric dyadic convex subset of $\mbD^n$
precisely if
\begin{equation}\label{E:geom}
C = \mr{conv}_{\mbD}(C).
\end{equation}

\subsection{Dyadic polytopes}\label{Ss:polytopes}

The class of dyadic convex sets we are interested in is the class of dyadic polytopes.
\begin{definition}\cite{MRS11}
A \emph{dyadic $n$-dimensional polytope} is the intersection with the dyadic space $\mbD^n$ of
an $n$-dimensional real polytope whose vertices lie in the dyadic space.
\end{definition}

Dyadic polytopes are obviously geometric, and for any dyadic $n$-dimen\-sional polytope $P$,
\begin{equation}\label{E:pol}
P = \mr{conv}_{\mbR}(P) \cap \mbD^n = \mr{conv}_{\mbD}(P).
\end{equation}
The vertices of the (real) polytope $\mr{conv}_{\mbR}(P)$ are contained in $P$, and are referred to as to the vertices of $P$. Unlike the case of real polytopes, the vertices of a dyadic polytope   (treated as a $\mathcal{CB}$-mode)  do not necessary generate the whole polytope. (See~\cite{MMR23}.)

In this paper, we are interested in dyadic polygons, especially in dyadic intervals and dyadic triangles (all considered as $\mathcal{CB}$-modes). Recall that unlike the real case, there are infinitely many pairwise non-isomorphic dyadic intervals, and infinitely many pairwise non-isomorphic dyadic triangles.

A special role is played by dyadic simplices. Dyadic simplices may be defined similarly as real simplices. An $n$-dimensional \emph{dyadic simplex} is an $n$-dimensional dyadic convex set with $n+1$ vertices, generated by these vertices. Geometric simplices are polytopes. Among dyadic polygons, all dyadic intervals isomorphic to the dyadic unit interval are $1$-dimensional simplices, and all $2$-dimensional simplices are isomorphic to the dyadic triangle generated by the three elements $e_0 = (0,0)$, $e_1 = (1,0)$ and $e_2 = (0,1)$ of $\mbD^2$.

Dyadic polytopes are considered as  $\circ$-subreducts  of their affine $\mbD$-hulls. After introducing coordinate axes in such a space, a given polytope will be located in the corresponding $\mbD$-module by providing the coordinates of its vertices. Affine $\mbD$-spaces of dimension $n$ are considered as subreducts of the $n$-dimensional real affine space $\mbR^n$. Moreover, isomorphisms of dyadic polytopes are considered as restrictions of automorphisms of their affine $\mbD$-hulls. (See~\cite{MMR23} for more explanations.)

\section{Pointed dyadic triangles}\label{S:Triangles}

While all real intervals are isomorphic as barycentric algebras, and each is generated by
its ends, there are infinitely many pairwise non-isomorphic dyadic intervals (considered as
$\mc{CB}$-modes). Dyadic intervals are characterized in~\cite{MRS11}. In particular,
each non-trivial interval of $\mbD$ is isomorphic to some dyadic interval $\mbD_k = [0,k]$,
where $k$ is an odd positive integer. Two such intervals are isomorphic precisely when their
right hand ends are equal.
For any $d \in \mbD$ and $n\in\mathbb N$, all dyadic intervals $[d,d+2^{n}]$ are isomorphic to $\mbD_1$, and the interval $\mbD_{1}$ is generated by $0$ and $1$. For $k > 1$, each dyadic interval $[d,d+k2^{n}]$ is isomorphic to $\mbD_k$, and the interval $\mbD_{k}$ is generated by three (but no fewer) elements. If an interval is isomorphic to $\mbD_k$, then we say that the interval has \emph{type} $k$. Note that among the intervals $\mbD_k$, only $\mbD_1$ is a (one-dimensional geometric) simplex.

Dyadic triangles\footnote{The dyadic triangles considered in this paper are closed (geometric) triangles contained in the plane $\mbD^2$,
which do not reduce to intervals or points. All dyadic triangles are considered as commutative
binary modes.} were described in \cite{MRS11}, with some improvements given
in \cite{MMR19a} and \cite{MMR23}. Roughly speaking, it was shown that each dyadic triangle of
the plane $\mbD^2$ is isomorphic to a triangle in the first quadrant of the plane, with one ``pointed'' vertex located at the origin and two remaining ones located as follows: either both are on two different axes, or precisely one is on an axis, or else none of them is on an axis. In particular, it was first shown in (\cite[Lemma~5.2]{MRS11}) that each dyadic triangle is isomorphic to a triangle $ABC$ contained in the first quadrant, with one pointed vertex, say $A$, located at the origin, and with the vertices $B$ and $C$ having non-negative
integer coordinates. (See Figure~\ref{F:1}.) Such triangles $ABC$ were called \emph{pointed
triangles}. The integers $i, j, m, n$ in Figure~\ref{F:1} were chosen so that $0 \leq i < m$,
$0 \leq  n < j$, and with odd $\gcd\{i,m\}$ and $\gcd\{j,n\}$. Such a triangle was denoted
$T_{i,j,m,n}$.
\begin{figure}[bht]
\begin{center}
\begin{picture}(140,120)(0,0)

\put(0,20){\vector(1,0){130}}
\put(20,0){\vector(0,1){110}}
\put(20,20){\line(1,1){60}}

\put(20,80){\line(1,0){100}}
\put(10,77){$n$}
\put(80,20){\line(0,1){80}}
\put(77,8){$i$}
\put(20,100){\line(1,0){100}}
\put(10,97){$j$}
\put(120,20){\line(0,1){80}}
\put(117,8){$m$}

\put(80,80){\circle*{3}}
\put(82,69){$G$}

\put(20,20){\circle*{5}}
\put(7,7){$A$}
\put(80,100){\circle*{5}}
\put(77,107){$B$}
\put(120,80){\circle*{5}}
\put(125,76){$C$}

\thicklines

\put(20,20){\line(5,3){100}}
\put(20,20){\line(3,4){60}}
\put(120,80){\line(-2,1){40}}

\end{picture}
\end{center}
%\vskip .05in
\caption{The triangle $T_{i,j,m,n}$}
\label{F:1}
\end{figure}
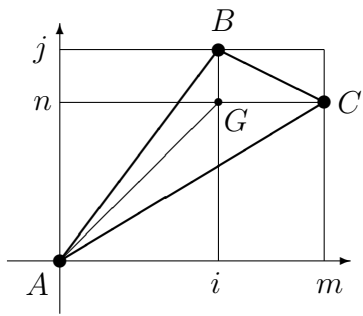

Note that each side of $T_{i,j,m,n}$ is a dyadic interval, so it is isomorphic
to some $\mbD_k$.

A triangle $T_{0,j,m,0}$ is a \emph{right triangle} (i.e. its shorter sides are parallel to the coordinate axes), and a triangle $T_{i,j,m,0}$ is a \emph{hat triangle} (i.e. one of its
sides is parallel to a coordinate axis). If the sides of a (dyadic) triangle have respective
types $r, s, t$, then the triangle has \emph{boundary type} $(r,s,t)$. Note that the type is
only defined up to cyclic order.
A right triangle is determined uniquely up
to isomorphism by its boundary type (\cite[Cor.~4.5]{MRS11}), and if the types of shorter sides are $r$ and $t$, then the type of the hypotenuse is $\gcd\{r,t\}$ (\cite[Pythagoras'
Thm.~4.3]{MRS11}). In particular, the triangle $T_{0,j,m,0}$, with odd $j$ and $m$, is
determined by its boundary type, or equivalently by the (odd) integers $j$ and $m$.

\begin{example}\label{Ex:jjm}
Consider the triangle $T_{j,j,m,0}$ with vertices $A = (0,0),$ $ B = (j,j)$ and $C = (m,0)$. Using the (invertible dyadic) matrix
\begin{equation}
\begin{bmatrix}
1 & 0 \\
-1 & 1
\end{bmatrix}\,,
\end{equation}
the triangle $ABC$ is transformed to the isomorphic right triangle\\ $T_{0,j,m,0}$ with
vertices $(0,0), (0,j)$ and $(m,0)$.
\end{example}

A hat triangle may be isomorphic to a right triangle, but \cite[Example~$4.6$]{MMR19a} exhibits hat triangles non-isomorphic to right triangles.

We recall some simple properties of automorphisms of the affine $\mbD$-plane $\mbD^2$, used frequently later on.

\begin{lemma}~\cite{MRS11}\label{L:transrefl}
All translations and all reflections of the dyadic plane $\mbD^2$ in one of the coordinate axes or in the line $y = x$ or $y = -x$ map any polygon in $\mbD^2$ onto an isomorphic polygon.
\end{lemma}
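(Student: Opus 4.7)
The plan is to exhibit each of the listed maps as an element of the affine group $\mathrm{GA}(2,\mbD)$, then appeal to the fact that any such map restricts to a $\mathcal{CB}$-mode isomorphism between a dyadic polygon and its image. Concretely, a translation $(x,y)\mapsto(x+a,y+b)$ with $a,b\in\mbD$ is by definition in $\mathrm{GA}(2,\mbD)$, and each of the four reflections in question is the linear map given by one of the matrices
\[
\begin{bmatrix} 1 & 0\\ 0 & -1\end{bmatrix},\
\begin{bmatrix} -1 & 0\\ 0 & 1\end{bmatrix},\
\begin{bmatrix} 0 & 1\\ 1 & 0\end{bmatrix},\
\begin{bmatrix} 0 & -1\\ -1 & 0\end{bmatrix}.
\]
Each of these has dyadic entries and determinant $\pm 1=\pm 2^0$, so belongs to $\mathrm{GL}(2,\mbD)$, and hence to $\mathrm{GA}(2,\mbD)$, which was described as the automorphism group of the affine $\mbD$-space $\mbD^2$ in Section~\ref{S:2.1}.

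Next I would invoke the fact that the arithmetic mean $x\circ y = xy\,\underline{1/2}$ is one of the basic operations $\underline{d}$ of the affine $\mbD$-space $\mbD^2$, so every automorphism of $(\mbD^2,\underline{\mbD})$ is in particular an automorphism of the $\circ$-reduct $(\mbD^2,\circ)$, and therefore of $\mbD^2$ as a $\mathcal{CB}$-mode. Let $\varphi$ denote one of our translations or reflections. For any dyadic polygon $P\subseteq\mbD^2$, which by definition is a $\circ$-subreduct of $\mbD^2$, the set $\varphi(P)$ is again a $\circ$-subreduct of $\mbD^2$, and $\varphi|_P\colon P\to\varphi(P)$ is a bijective homomorphism, hence an isomorphism of $\mathcal{CB}$-modes.

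It remains to verify that $\varphi(P)$ is again a \emph{polygon}, i.e.\ arises as the intersection of $\mbD^2$ with a real polygon having dyadic vertices. For this I would extend $\varphi$ to the affine $\mbR$-map $\widetilde{\varphi}\colon\mbR^2\to\mbR^2$ given by the same matrix and translation vector; since $\widetilde{\varphi}$ is an affine bijection of $\mbR^2$ sending $\mbD^2$ onto $\mbD^2$, it sends the real convex hull $\mathrm{conv}_{\mbR}(P)$ (a real polygon with dyadic vertices) bijectively to another real polygon $\widetilde{\varphi}(\mathrm{conv}_{\mbR}(P))$ with dyadic vertices. Using~\eqref{E:pol} and the fact that $\widetilde{\varphi}(\mbD^2)=\mbD^2$, one obtains
\[
\varphi(P) = \widetilde{\varphi}\bigl(\mathrm{conv}_{\mbR}(P)\cap\mbD^2\bigr) = \widetilde{\varphi}(\mathrm{conv}_{\mbR}(P))\cap\mbD^2,
\]
so $\varphi(P)$ is indeed a dyadic polygon.

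There is no real obstacle here; the only point that requires care is the explicit check that each reflection matrix has determinant $\pm 2^k$ so that it lies in $\mathrm{GL}(2,\mbD)$, which is exactly what restricts the statement to these four particular reflection lines rather than allowing arbitrary rotational reflections.
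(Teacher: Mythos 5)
Your argument is correct, and it is the natural one: the paper itself states this lemma as a citation to \cite{MRS11} without reproducing a proof, and exhibiting the maps as elements of $\mathrm{GA}(2,\mbD)$, restricting to the $\circ$-reduct, and transporting $\mathrm{conv}_{\mbR}(P)$ by the extended real affine map is exactly the intended reasoning.

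One small inaccuracy in your closing remark: the determinant condition is not what singles out these four reflection lines, since \emph{every} reflection of the plane has determinant $-1$, which is of the form $\pm 2^k$. What restricts the statement is that the reflection matrix must have dyadic entries (and a dyadic inverse, which is automatic here since a reflection is its own inverse): for a reflection in a general line $y=mx$ the matrix is $\frac{1}{1+m^2}\begin{bmatrix}1-m^2 & 2m\\ 2m & m^2-1\end{bmatrix}$, and $1+m^2$ need not be a power of $2$ (e.g.\ $m=1/2$ gives denominators divisible by $5$), so such a reflection is generally not an automorphism of $\mbD^2$. This does not affect the validity of your proof of the stated lemma.
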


\begin{lemma}\label{L:fracint}
Let $P = ({p}/{2^r},{q}/{2^s})$ be a point of $\mbD^2$
with integers $r$, $s$ and odd integers $p $ and $q$. Then there is a $\mbD$-module
automorphism which maps $P$ to a point $P' = (p',q')$ with integral coordinates.
\end{lemma}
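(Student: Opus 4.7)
The plan is direct and constructive: I would exhibit a specific element of $\mathrm{GL}(2,\mbD)$ that simultaneously clears the denominators of the two coordinates of $P$.

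First, I would recall from Section~\ref{S:2.1} that a $\mbD$-module automorphism of $\mbD^2$ is given by a $2\times 2$ matrix $M$ with entries in $\mbD$ whose determinant is a unit of $\mbD$, i.e.\ of the form $\pm 2^{k}$ for some $k \in \mbZ$. My strategy is then to scale each coordinate independently by the appropriate power of $2$, via a diagonal matrix.

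Next, I would take
\[
M = \begin{bmatrix} 2^{r} & 0 \\ 0 & 2^{s} \end{bmatrix}.
\]
For any integers $r$ and $s$, both $2^{r}$ and $2^{s}$ belong to $\mbD$, so $M$ has dyadic entries, and its determinant equals $2^{r+s}$, which is a unit in $\mbD$. Hence $M \in \mathrm{GL}(2,\mbD)$ and defines a $\mbD$-module automorphism of $\mbD^{2}$. Applying $M$ to $P$ yields the point with coordinates $(p,q)$, which has integer entries. This proves the claim with $P' = (p,q)$.

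There is essentially no obstacle here: the only point to verify is that diagonal scaling by an arbitrary dyadic unit is a genuine $\mbD$-linear automorphism of $\mbD^{2}$, and this is immediate from the description of $\mathrm{GL}(n,\mbD)$ recalled in Section~\ref{S:2.1}. The oddness hypothesis on $p$ and $q$ is not actually needed for the conclusion as stated, but it ensures that the exponents $r$ and $s$ are the minimal ones making the original coordinates dyadic, and hence that the resulting integer coordinates $p'=p$ and $q'=q$ are themselves odd, which is the form in which the lemma is typically applied later.
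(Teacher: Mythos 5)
Your proof is correct and follows essentially the same route as the paper: the paper also uses the diagonal dyadic matrix $M_{kl}$ with powers of $2$ on the diagonal (taken ``sufficiently large''), of which your choice $k=r$, $l=s$ is just a specific instance. The only cosmetic difference is that the paper's accompanying remark prefers equal exponents $k=l$, which makes only one coordinate odd, whereas your choice makes both coordinates odd; either way the lemma's conclusion holds.
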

\begin{proof}
We use the automorphism given by the matrix
\begin{equation}\label{E:kl}
M_{kl} = \begin{bmatrix}
2^k & 0 \\
0 & 2^l
\end{bmatrix}
\end{equation}
for sufficiently large integers $k$ and $l$.
\end{proof}

\begin{remark}\label{R:fracint}
If $r \leq s$, then it suffices to take $k = l = s$, Then $P$ is mapped to $P' = (p 2^{s-r}, q)$. If $r > s$, then one can take $k = l = r$, and $P$ is mapped to $P' = (p,q 2^{r-s})$. Note that, in both cases, at least one of the coordinates of $P'$ is odd.
\end{remark}

  Similarly, one shows that if $j' = j 2^k$ for an odd $j$,  then the hat triangles
$T_{i,j,m,0}$ and $T_{i,j',m,0}$ are isomorphic. So when considering hat triangles
$T_{i,j,m,0}$, one frequently assumes that $j$ is odd.

\begin{corollary}\label{C:fracint}
Let $P_1, \dots, P_n$ be points of $\mbD^2$ with $P_i =
({p_i}/{2^{r_i}},{q_i}/{2^{s_i}})$, for $i =1, \dots, n$, with integers $r_i$,
$s_i$ and odd integers $p_i$ $q_i$. Then there is a $\mbD$-module automorphism which maps
each point $P_i$ to a point $P'_i = (p'_i,q'_i)$ with integral coordinates.
\end{corollary}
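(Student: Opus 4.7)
The plan is to mimic the proof of Lemma~\ref{L:fracint}, but to choose the exponents $k$ and $l$ in the diagonal matrix $M_{kl}$ large enough to clear the denominators of \emph{all} points $P_1, \dots, P_n$ simultaneously.

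Concretely, I would set
\[
k = \max\{r_1, \dots, r_n\} \quad \text{and} \quad l = \max\{s_1, \dots, s_n\},
\]
and then apply the automorphism of the $\mbD$-module $\mbD^2$ given by $M_{kl}$ from equation~(\ref{E:kl}). Since $\det M_{kl} = 2^{k+l}$ is a unit in $\mbD$ (its inverse $2^{-(k+l)}$ is dyadic), the map is indeed a $\mbD$-module automorphism. For each $i$, the image of $P_i$ under $M_{kl}$ is
\[
P'_i = \bigl(p_i \, 2^{k - r_i},\, q_i \, 2^{l - s_i}\bigr),
\]
which has integral coordinates because $k - r_i \geq 0$ and $l - s_i \geq 0$ by the choice of $k$ and $l$.

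There is no real obstacle here; the only thing to observe is that the exponents in Lemma~\ref{L:fracint} may be chosen uniformly over a finite set of points, which is immediate since the maximum of finitely many integers is finite. The statement follows at once, so I would present the corollary's proof as essentially a one-line application of the lemma with $k, l$ taken as the maxima above.
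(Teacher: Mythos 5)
Your proposal is correct and follows the paper's own argument: the paper likewise applies the diagonal automorphism $M_{kl}$ with sufficiently large exponents, taking $k = l = \max\{r_1,\dots,r_n,s_1,\dots,s_n\}$, while your choice of separate maxima for $k$ and $l$ is an inessential variant. Nothing is missing.
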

\begin{proof}
Again, one uses the matrix $M_{kl}$ for sufficiently large $k$ and $l$. In particular, one can
take $k = l = \max\{r_1, \dots, r_n, s_1, \dots, s_n\}$.
\end{proof}

Note that, if some of the $p_i$ or $q_i$ of Corollary~\ref{C:fracint} are zero, then one can use a similar method to transform all $P_i$ to $P'_i$ with integral coordinates.

Note also that, if $P_1, \dots, P_n$ are the vertices of a polygon $P$, then the $\mbD$-module automorphism of Corollary~\ref{C:fracint} maps the polygon $P$ onto an isomorphic polygon having vertices with integral coordinates.

When a vertex $A$ of a dyadic triangle located at the origin is already chosen, the triangle
$ABC$ comes in one of the three types as described in the following theorem.

\begin{theorem}\cite[\S\S5--6]{MRS11},\cite[\S1]{MMR19a}, \cite[\S6]{MMR23}\label{T:classif}
Each pointed dyadic triangle $ABC$ comes as a triangle $T_{i,j,m,n}$  in one of the following
three types:
\begin{itemize}
\item[(a)] right triangles $T_{0,j,m,0}$ with $j$ and $m$ odd and $j \leq m$;
\item[(b)]  hat triangles $T_{i,j,m,0}$ with $0 < i \leq m/2$, odd $j > 1$, and\\ $\gcd\{i,
    j\} \neq  j$;
\item[(c)] other triangles, such that neither of $i, n$ is zero, and moreover $j \leq m$,
    $\gcd\{i, j\} \notin \{i, j, 1\}$ and $\gcd\{m, n\} \notin \{m, n,1\}$.
\end{itemize}
\end{theorem}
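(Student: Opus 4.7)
The plan is to start from the reduction already given in \cite[Lemma~5.2]{MRS11}, namely that every pointed dyadic triangle is isomorphic to some $T_{i,j,m,n}$ with $0 \leq i < m$, $0 \leq n < j$ and with $\gcd\{i,m\}$ and $\gcd\{j,n\}$ odd. From there, the refinement into the three listed normal forms is carried out by repeatedly applying the symmetries of Lemma~\ref{L:transrefl}, together with shears and other unimodular elements of $\mathrm{GL}(2,\mbD)$. I would split into three cases according to how many of $i,n$ vanish.

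Case (a) ($i=n=0$) is immediate: $j=\gcd\{j,0\}$ and $m=\gcd\{0,m\}$ are both odd by hypothesis, and the reflection in the line $y=x$ permits the normalisation $j\leq m$. For case (b), where exactly one of $i,n$ vanishes, I would first reflect in $y=x$ if needed so that $n=0$, leaving a hat triangle $T_{i,j,m,0}$ with $j$ odd. The key observation, illustrated by Example~\ref{Ex:jjm}, is that whenever $j\mid i$, writing $i=kj$, the integer shear with matrix $\begin{bmatrix} 1 & -k \\ 0 & 1 \end{bmatrix}$ fixes $A$ and $C$ and sends $B=(kj,j)$ to $(0,j)$, producing a right triangle of case (a). Excluding this collapse is exactly the condition $\gcd\{i,j\}\neq j$; the condition $j>1$ is then forced, since $j=1$ trivially divides $i$. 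The restriction $i\leq m/2$ comes from the reflection through the vertical line $x=m/2$, which swaps $A$ and $C$ but preserves the triangle; since either of them may be taken as the pointed vertex of the reduced form, $i$ can always be chosen at most $m/2$.

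For case (c), where both $i$ and $n$ are nonzero, I would read the three exclusions on $\gcd\{i,j\}$ as blocking three kinds of reduction of $B$. If $\gcd\{i,j\}=j$, a vertical shear as above lands $B$ on the $y$-axis, collapsing into case~(b); if $\gcd\{i,j\}=i$, an analogous horizontal shear lands $B$ on the $x$-axis; and if $\gcd\{i,j\}=1$, a Bezout identity $ai+bj=1$ supplies a unimodular matrix sending $B$ to $(1,0)$, again a right-triangle configuration. The parallel exclusions on $\gcd\{m,n\}$ handle the vertex $C$ symmetrically. Once all these reductions have been blocked, the reflection in $y=x$ enforces $j\leq m$, giving the stated form~(c).

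The hard part will be verifying that the exclusions in (c) are sharp: that after removing them, no further element of $\mathrm{GA}(2,\mbD)$ collapses the triangle onto one of the simpler forms, and that it is legitimate to move freely between the three candidate pointed vertices when composing reflections and shears. This is precisely the subtle point at which \cite[Lemma~4.11]{MRS11} was originally flawed, and where the corrections of \cite{MMR19a, MMR23} are essential; careful bookkeeping of which vertex is regarded as pointed at each step is the reason the present paper returns to this classification afresh.
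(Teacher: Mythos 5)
Your sketch follows essentially the same route as the sources this theorem is quoted from (note that the present paper does not prove Theorem~\ref{T:classif} at all; it is cited from \cite{MRS11}, \cite{MMR19a}, \cite{MMR23}): start from the $T_{i,j,m,n}$ normal form of \cite[Lemma~5.2]{MRS11}, split on how many of $i,n$ vanish, and normalize with translations, integer shears (as in Lemma~\ref{L:infisoms}), the Bezout/axis move (Lemma~\ref{L:onaxis}), reflections and powers-of-two rescalings, the gcd exclusions recording exactly when a further collapse to type (a) or (b) is possible. Two points deserve correction. First, in case (c) the Bezout matrix sends $B=(i,j)$ with $\gcd\{i,j\}=1$ to a point on an axis, which produces a (generalized) hat configuration, not a ``right-triangle configuration''; that is still enough, since after the renormalizations of Remark~\ref{R:rightshats} such a triangle lands in type (a) or (b), but the claim as written is imprecise. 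Second, the ``hard part'' you defer --- sharpness of the exclusions in (c), i.e.\ that no further element of $\mathrm{GA}(2,\mbD)$ collapses such a triangle onto a simpler form --- is not part of this statement: Theorem~\ref{T:classif} is purely a normal-form (existence) claim, and the paper stresses that the representative type is not unique and depends on the chosen pointed vertex. Sharpness and uniqueness are exactly where Theorem~6.6 of \cite{MRS11} went wrong, and they are treated separately in Sections~\ref{S:ClassifTriangles}--\ref{S:5} (Theorems~\ref{T:reprhats} and \ref{T:charactdyadtrs}); what your sketch does still owe for the present statement is the routine but necessary bookkeeping that each normalization keeps the coordinates integral, preserves the parity conditions (odd $\gcd\{i,m\}$ and $\gcd\{j,n\}$, odd $j$ in the hat case), and that $j\le m$ in cases (a) and (c) can be arranged without violating the remaining constraints.
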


The triangles $T_{i,j,m,n}$ described in Theorem~\ref{T:classif} were
called \emph{representative triangles} of types (a), (b) and (c), respectively.

A weak aspect of the description provided in Theorem~\ref{T:classif} is that the same triangle may
have two different types of representative triangles, e.g. for two different choices of
the point $A$, see~\cite{MMR19a}. While the original version of Theorem
~\ref{T:classif} is true, some of its consequences presented in~\cite{MRS11} are not. A
mistake in Lemma~4.11 of~\cite{MRS11} resulted in an incorrect formulation of Theorem~6.6 of
~\cite{MRS11} (repeated in~\cite{MMR19a}). This theorem says that two dyadic triangles are isomorphic if and only if they
have the same encoding quadruple $(i,j,m,n)$, i.e. the same representative triangles.

The following section contains an improved description of pointed dyadic triangles and   their  classification.

\section{Classification of dyadic triangles}\label{S:ClassifTriangles}

We start with a necessary condition for two dyadic triangles to be
isomorphic. Then, a correction to the statement of Lemma~$4.11$ in~\cite{MRS11} is presented.
Next, we show that each pointed dyadic triangle is isomorphic to a certain special triangle we call \emph{representative}. As a conclusion we obtain a full classification of   pointed   dyadic triangles. This result is then used in the final section to characterize all representative dyadic triangles.

\begin{proposition}\label{P:area}
Let $\iota:T \rightarrow T'$ be an isomorphism between two dyadic triangles $T$ and $T'$. Let
$P$ be the area of the convex $\mbR$-hull $\mr{conv}_{\mbR}(T)$ of $T$ and $P'$ be the area of
the convex $\mbR$-hull $\mr{conv}_{\mbR}(T')$ of $T'$. Then $ P/P' = 2^k$ for some integer $k$.
\end{proposition}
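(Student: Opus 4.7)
The plan is to reduce the statement to the determinant structure of $\mathrm{GL}(2,\mbD)$ recalled in Section~\ref{S:2.1}. Recall from the convention at the end of Section~\ref{Ss:polytopes} that an isomorphism $\iota : T \to T'$ of dyadic polytopes is the restriction of an automorphism of the common affine $\mbD$-hull, so in our situation $\iota$ is the restriction to $T$ of some $\hat{\iota} \in \mathrm{GA}(2,\mbD)$. Write $\hat{\iota}(x) = Mx + b$ with $M \in \mathrm{GL}(2,\mbD)$ and $b \in \mbD^2$.

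First I would lift $\hat{\iota}$ to a real affine automorphism of $\mbR^2$ by using the same matrix $M$ and translation $b$ over $\mbR$; call this map $\widetilde{\iota}$. Since $\widetilde{\iota}$ is an $\mbR$-affine bijection of $\mbR^2$ extending $\hat{\iota}$, it carries real convex hulls to real convex hulls, so $\widetilde{\iota}(\mr{conv}_{\mbR}(T)) = \mr{conv}_{\mbR}(\widetilde{\iota}(T))$. Because $T$ and $T'$ are dyadic triangles, their vertices belong to $T$ and $T'$ respectively (see~\eqref{E:pol} and the surrounding discussion), and $\hat{\iota}$ sends the three vertices of $T$ to the three vertices of $T'$; hence $\widetilde{\iota}(\mr{conv}_{\mbR}(T)) = \mr{conv}_{\mbR}(T')$.

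Now apply the standard change-of-variables formula: the (two-dimensional Lebesgue) area of an affine image of a measurable set is multiplied by the absolute value of the determinant of the linear part. Translations contribute nothing, so
\[
P' = |\det M| \cdot P.
\]
By the description of $\mathrm{GL}(n,\mbD)$ recalled in Section~\ref{S:2.1}, $\det M = \pm 2^{\ell}$ for some integer $\ell$, and therefore $P'/P = 2^{\ell}$, which gives $P/P' = 2^{-\ell} = 2^{k}$ with $k = -\ell \in \mbZ$.

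The only delicate point is the first step, i.e.\ guaranteeing that $\iota$ genuinely extends to an element of $\mathrm{GA}(2,\mbD)$; this is not a routine calculation but is precisely the convention established at the end of Section~\ref{Ss:polytopes}, so no additional work is required. Everything else is an immediate consequence of the dyadic determinant condition and the fact that real convex hulls commute with affine maps.
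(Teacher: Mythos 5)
Your proposal is correct and takes essentially the same approach as the paper: both reduce the statement to writing $\iota$ as a translation composed with a linear map given by an invertible dyadic matrix $M$, and then use the fact that $\det M = \pm 2^{\ell}$. The only difference is cosmetic --- the paper verifies the scaling $P' = |\det M|\cdot P$ by an explicit computation with the $2\times 2$ determinant formula for the triangle area in terms of the vertex coordinates, whereas you invoke the general change-of-variables fact for affine images of the real convex hulls.
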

\begin{proof}
Let $A = (x_a,y_a)$, $B = (x_b,y_b)$, $C = (x_c,y_c)$ be the vertices of $T$. The isomorphism
$\iota$ maps the set of vertices of $T$ onto the set of vertices of $T'$. Let $A' =
(x'_a,y'_a)$, $B' = (x'_b,y'_b)$, $C' = (x'_c,y'_c)$ be the vertices of $T'$, the respective
images of $A,B$ and $C$ under $\iota$.
Then	
\[
		P = \frac{1}{2}\left| \rm{det}
		\begin{bmatrix}
			x_b-x_a&y_b-y_a\\
		    x_c-x_a&y_c-y_a
		\end{bmatrix}\right|
\]	
and
\[
		P' = \frac{1}{2} \left|\rm{det}
		\begin{bmatrix}
			x'_b-x'_a&y'_b-y'_a\\
			x'_c-x'_a&y'_c-y'_a
		\end{bmatrix}\right|.
\]
The isomorphism $\iota$ is a composition of a translation and a linear mapping given by an
invertible dyadic matrix $M$. Note that translations do not change the area of a polygon. Then

\begin{align*}
P' &= \frac{1}{2}\begin{vmatrix} \det
 \begin{bmatrix}
 	x'_b-x'_a&y'_b-y'_a\\
 	x'_c-x'_a&y'_c-y'_a
 \end{bmatrix}\end{vmatrix} \\
 &=
\frac{1}{2}\begin{vmatrix} \det\left(
\begin{bmatrix}
	x'_b &y'_b \\
	x'_c &y'_c
\end{bmatrix}- \begin{bmatrix}
 x'_a& y'_a\\
 x'_a& y'_a
\end{bmatrix}\right)\end{vmatrix}\\
&=\frac{1}{2}\begin{vmatrix} \det\left(
\begin{bmatrix}
	x_b &y_b \\
	x_c &y_c
\end{bmatrix}M- \begin{bmatrix}
	x_a& y_a\\
	x_a& y_a
\end{bmatrix}M\right)\end{vmatrix} \\
&=
\frac{1}{2}\begin{vmatrix}\det
	\left(\begin{bmatrix}
		x_b-x_a&y_b-y_a\\
	x_c-x_a&y_c-y_a
\end{bmatrix}
	 M\right)\end{vmatrix}
\\&= \frac{1}{2}\begin{vmatrix} \det
			\begin{bmatrix}
			x_b-x_a&y_b-y_a\\
			x_c-x_a&y_c-y_a
		\end{bmatrix}
	\cdot \det M\end{vmatrix} = P\cdot 2^{l}
\end{align*}
for some integer $l$.
\end{proof}

We will see later on that the converse of Proposition~\ref{P:area} does not hold.

\begin{remark}\label{R:orient}
By leaving the sign of the absolute value in the areas $P$ and $P'$, given as in the proof of Proposition~\ref{P:area}, one obtains so-called signed areas $P_s$ and $P'_s$, and one can easily see that $P'_s = P_s \cdot \det M$. The orientation of the vertices of a triangle depends on the sign of its area. It follows that if $\det M > 0$, then the orientation of the vertices of $T$ and $T'$ are the same, and if $\det M < 0$, then the isomorphism $\iota: T \rightarrow T'$ changes the orientation of the vertices.
\end{remark}

Note that isomorphic dyadic triangles with the same orientation of vertices have the same
boundary type. However, next
Example \ref{Ex:mmm0} shows that two non-isomorphic dyadic triangles may also have the same
boundary type.

\begin{example}\label{Ex:mmm0}
In~\cite[Prop.~4.7]{MRS11}, it was shown that there are infinitely many pairwise non-isomorphic
triangles of boundary type $(1,1,1)$. This observation may be extended to triangles of boundary
type $(m,m,m)$ for any odd positive integer $m$.
Let $T = ABC$ be the hat with $A = (0,0), B = (m,m)$ and $C = (2m,0)$. Let $T_k = AB_kC$ be a
hat with $B_k = (m,km)$, where $k$ is also a positive odd integer. Note that $T_1 = T$, and the
boundary type of each $T_k$ is $(m,m,m)$. By Proposition~\ref{P:area}, if $k \neq l$,
then the triangles $T_k$ and $T_l$ are not isomorphic.
\end{example}

The following lemma provides a correction to the statement of Lemma\\$4.11$ in~\cite{MRS11}.

\begin{lemma}\label{L:onaxis}
Let $P$ be a point of the plane $\mbD^2$ with integral coordinates, and not belonging to the
coordinate axes. There is a $\mbD$-module automorhism which takes the point $P$ onto a point of one of the coordinate axes, again with integral coordinates.
\end{lemma}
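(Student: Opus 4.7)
The plan is to observe that any integer matrix with determinant $\pm 1$ belongs to $\mathrm{GL}(2,\mbD)$, and then to use B\'ezout's identity over $\mbZ$ to construct such a matrix that zeroes out one coordinate of $P$. Write $P = (p,q)$ with $p,q \in \mbZ \setminus \{0\}$ and set $g = \gcd(p,q)$. By B\'ezout there exist $a,b \in \mbZ$ with $ap + bq = g$, and both $p/g$ and $q/g$ are integers.

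Next, I would consider the matrix
\[
M \;=\; \begin{bmatrix} a & b \\ -q/g & p/g \end{bmatrix}.
\]
A direct computation gives $\det M = (ap + bq)/g = 1$, so $M$ is an integer matrix of determinant $1$; in particular $M \in \mathrm{SL}(2,\mbZ) \subseteq \mathrm{GL}(2,\mbD)$ (its determinant is $2^{0}$ and its inverse again has integer entries). Another direct computation shows
\[
M \begin{bmatrix} p \\ q \end{bmatrix} \;=\; \begin{bmatrix} ap + bq \\ -qp/g + pq/g \end{bmatrix} \;=\; \begin{bmatrix} g \\ 0 \end{bmatrix},
\]
so $M$ is the required $\mbD$-module automorphism and the image point $(g,0)$ lies on the $x$-axis with integral coordinate.

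There is essentially no obstacle here beyond producing the right matrix: the only subtle point is confirming that the construction stays inside $\mathrm{GL}(2,\mbD)$, which is immediate since $\mathrm{SL}(2,\mbZ) \subseteq \mathrm{GL}(2,\mbD)$. (Alternatively, one could avoid invoking B\'ezout explicitly by iterating elementary shears $\bigl[\begin{smallmatrix} 1 & 0 \\ -s & 1 \end{smallmatrix}\bigr]$ driven by the Euclidean algorithm on $(p,q)$, composed at the end with a swap of coordinates if one wishes to land on the other axis; this yields the same $\mathrm{SL}(2,\mbZ)$ automorphism.) Note that this automorphism, being integral of determinant $1$, also preserves the area of any polygon, consistent with Proposition~\ref{P:area}.
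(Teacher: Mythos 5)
Your proposal is correct and follows essentially the same route as the paper: both construct an integral unimodular matrix from B\'ezout coefficients of the two coordinates (the paper writes $P=(ma,mb)$ with $\gcd\{a,b\}=1$ and uses $\bigl[\begin{smallmatrix} y & -b \\ -x & a\end{smallmatrix}\bigr]$ with $ay-bx=1$, acting on row vectors), sending $P$ to $(\gcd,0)$ on the $x$-axis. The only differences are notational (column versus row vectors, extracting the gcd as $g$ rather than $m$), so there is nothing to add.
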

\begin{proof}
Let $P = (ma,mb)$ with (positive) $m$ and such that $\gcd\{a,b\} = 1$. Let $M_{ax}$ be the dyadic
matrix
\begin{equation}\label{E:Max}
M_{ax} =
\begin{bmatrix}
y & -b \\
-x & a
\end{bmatrix}
\end{equation}
with integers $x$ and $y$, and such that $ \det (M_{ax}) = ay - bx = 1$. (Note that the latter
equation has an integral solution.) Then $(ma,mb) M_{ax} = (may - mbx, -mab + mab) = (m,0)$.

In a similar way (using the matrix $M_{ax}$ with changed order of the columns), one can map the point $P$ to the point $(0,m)$ of the $y$-axis.
\end{proof}
\noindent  Note that using the reflexion
in the line $y = x$, one can always transform $(0,m)$ to $(m,0)$ and $(m,0)$ to $(0,m)$.

Any dyadic triangle $T$ considered in this section will be thought of as a triangle with
clockwise ordered vertices $A, B, C$, and will be denoted by $ABC$. Note that any vertex may be choosen to be in the first place.
In all transformations of $T$, each of the three vertices will be mapped to a
vertex denoted by the same letter, but possibly with a different index.

\begin{proposition}\label{P:newclassif1}
Let $T$ be a dyadic triangle in the dyadic plane $\mbD^2$. Then $T$ is isomorphic
to each of the following triangles with vertices having integral coordinates:
\begin{enumerate}
\item[(a)] A triangle $ABC$ contained in the upper halfplane of $\mbD^2$ with one vertex, say $A$, located at the origin, and another vertex, say $C = (m,0)$, on the positive part of the $x$-axis;
\item[(b)] A triangle $ABC$ contained in the first quadrant of $\mbD^2$ with $A$ and $C$
    defined as above, and the third vertex $B = (i,j)$ such that $0 \leq i \leq m$.
\end{enumerate}
\end{proposition}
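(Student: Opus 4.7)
My plan is to reduce an arbitrary dyadic triangle $T$ to the required canonical form by a composition of $\mbD$-module automorphisms already available in the excerpt (Corollary~\ref{C:fracint}, Lemma~\ref{L:transrefl}, Lemma~\ref{L:onaxis}) together with a diagonal dilation and an integer shear introduced at the end.

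For part (a), I would first invoke Corollary~\ref{C:fracint} to obtain an isomorphic copy of $T$ whose three vertices lie in $\mbZ^2$. A translation by an integer vector (an automorphism by Lemma~\ref{L:transrefl}) moves a selected vertex to the origin, which I call $A$. Applying Lemma~\ref{L:onaxis} to one of the remaining vertices then places it at $(\pm m, 0)$ for some positive integer $m$ (using the reflection in $y = x$ from Lemma~\ref{L:transrefl} if it first lands on the $y$-axis), and a reflection in the $y$-axis if necessary produces $C = (m, 0)$ with $m > 0$; a reflection in the $x$-axis then puts the third vertex $B$ into the upper half-plane. Every matrix used here has integer entries and every translation is by an integer vector, so integrality of the vertices is preserved throughout, yielding configuration (a) with $A = (0, 0)$, $C = (m, 0)$, $m > 0$, and $B = (i, j) \in \mbZ^2$ with $j > 0$.

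For part (b), I start from the output of (a) and use the integer shear $S_k = \begin{bmatrix} 1 & 0 \\ k & 1 \end{bmatrix}$, which lies in $\mathrm{GL}(2, \mbZ) \subset \mathrm{GL}(2, \mbD)$, fixes $A$ and every point of the $x$-axis (hence fixes $C$), and sends $B$ to $(i + kj, j)$. Choosing $k = -\lfloor i/j \rfloor$ produces a new third vertex $(i', j)$ with $0 \le i' < j$. When $j \le m$ this already gives $0 \le i' \le m$, the triangle lies in the first quadrant, and configuration (b) is achieved.

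The only genuine obstacle is the case $j > m$, in which the integer shear alone cannot force $i' \le m$. I would resolve it by first stretching the $x$-axis through the dyadic dilation $M_n = \begin{bmatrix} 2^n & 0 \\ 0 & 1 \end{bmatrix}$ with $n$ chosen large enough that $2^n m \ge j$; since $\det M_n = 2^n$ and $M_n^{-1} = \begin{bmatrix} 2^{-n} & 0 \\ 0 & 1 \end{bmatrix}$ has dyadic entries, $M_n \in \mathrm{GL}(2, \mbD)$. It fixes $A$, sends $C$ to $(2^n m, 0)$, and sends $B$ to $(2^n i, j)$, still with integer coordinates. A subsequent integer shear then delivers a vertex $(i'', j)$ with $0 \le i'' < j \le 2^n m$, so setting $m' := 2^n m$ the resulting triangle satisfies configuration (b), completing the proof.
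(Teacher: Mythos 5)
Your proposal is correct and follows essentially the paper's own route: Corollary~\ref{C:fracint}, a translation, and Lemma~\ref{L:onaxis} give part (a), and a shear fixing the $x$-axis combined with a dyadic dilation $\mathrm{diag}(2^{n},1)$ gives part (b). The only cosmetic differences are that you return $B$ to the upper half-plane by a reflection (harmlessly reversing orientation, which the paper instead avoids by noting $\det M_{ax}=1>0$), and that you dilate first and then apply an integer shear, whereas the paper shears by a dyadic coefficient $c$ and afterwards dilates to restore integrality.
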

\begin{proof}
(a) Let $T$ be a triangle $A_0B_0C_0$ with clockwise ordered vertices.
First note that $T$ may be mapped to an isomorphic triangle with one vertex located at the
origin. Just choose one vertex, say $A_0$, and translate $T$ to a triangle $A_1B_1C_1$ with the image $A = A_1$ of $A_0$ located at the origin.

Then transform $AB_1C_1$ to the triangle $AB_2C_2$ with vertices having integral coordinates,
using Corollary~\ref{C:fracint}.

Now suppose that the vertex $C_2$ with integral coordinates is not already on the $x$-axis, and write it in the form $C_2 = (ma,mb)$, where $\gcd\{a,b\} = 1$, as described in the proof of Lemma~\ref{L:onaxis}. The triangle $AB_2C_2$ will be mapped to the isomorphic triangle $AB_3C_3$  using the matrix $M_{ax}$ of Lemma~\ref{L:onaxis}. The coordinates of $B_3$ and $C_3$ will be still integral, with $C_3 = (m,0)$ on the positive part of the $x$-axis.  Note as well that since $ \det (M_{ax}) = 1 > 0$, it follows that the orientation of the vertices of the triangles $AB_2C_2$ and $AB_3C_3$ must be the same. (Cf. Remark~\ref{R:orient}.) So $B_3$ is contained in the upper halfplane, and one obtains a triangle $ABC$ in the upper halfplane with $B = B_3$ and $C = C_3$.

(b) Consider the triangle $AB_3C$ with $C = C_3$ obtained in the first part of the proof. If the first coordinate of $B_3 = (i',j')$ is negative, then consider the dyadic plane automorphism $\iota$ given by the dyadic matrix
\begin{equation}\label{E:M}
\begin{bmatrix}
1 & 0 \\
c & 1
\end{bmatrix}
.
\end{equation}
The automorphism $\iota$ keeps the vertices $A$ and $C$ fixed, and maps $B_3$ to $B_4 = (i'+j'c,j')$. Then $0 \leq i'+j'c \leq m$ precisely when $-i'/j' \leq c \leq (m-i')/j'$.
Note that for positive $m$, such a dyadic number $c$ always exists.

One more adjustment may be needed in the case when $i'+j'c$ is not an integer, say $i'+j'c =
n/2^{k}$ for some integers $n$ and $k$. But then, one can use the matrix~\eqref{E:kl}, with $l = 0$, to map the triangle $AB_4C$ to the isomorphic triangle $ABC = AB_5C_5$, where $B_5 = (i,j) = (n,j')$ and $C_5 = (m',0) = (m2^k,0)$ with the integer $i$ satisfying $0 \leq i \leq m'$.
\end{proof}

\begin{remark}\label{R:b2c2}
If, in the third paragraph of the proof of Proposition~\ref{P:newclassif1}, one starts with the vertex $B_2$ instead of $C_2$ and one puts $B_2 = (ma,mb)$ with $\gcd\{a,b\} = 1$ and a positive integer $m$, then the triangle $AB_2C_2$ is mapped onto the isomorphic triangle $AC_3B_3$ with $B_3 = (m,0)$ on the positive part of the $x$-axis, and $C_3$ with integral coordinates. (Note the change of order of vertices.) Then one proceeds as in the proof of
Proposition~\ref{P:newclassif1} to obtain an isomorphic triangle $ACB$ with the required properties.
\end{remark}

\begin{remark}\label{R:rightshats}
Consider a triangle $ABC$ as constructed in Proposition~\ref{P:newclassif1} with $A = (0,0), B
=(i,j), C = (m,0)$.

If $i = m$, then using Lemma~\ref{L:transrefl}, one can transform $ABC$ to the isomorphic
triangle $T_{0,j,m,0}$. If $j > m$, then using Lemma~\ref{L:transrefl} again, one can transform $ABC$ onto the triangle $T_{0,m,j,0}$ with $m < j$. By Lemma~\ref{L:fracint}, one may assume that both $j$ and $m$ are odd. So up to isomorphism, right triangles may be considered as the triangles $T_{0,j,m,0}$ with $j$ and $m$ odd and $j \leq m$.

If $0 < i < m$, the triangle $ABC$ is a hat triangle. Using an appropriate matrix~\eqref{E:kl}
one can transform the triangle $ABC$ onto the isomorphic hat triangle $AB'C'$ with $B' =
(i',j')$ and $C' = (m',0)$ such that $j'$ and $\mr{gcd}\{i',m'\}$ are odd. So, up to
isomorphism, one can consider hat triangles as triangles $T_{i,j,m,0}$ with odd $j$ and odd
$\mr{gcd}\{i,m\}$.

Finally note that a similar transformation may be used in the case of a triangle with the
coefficient $i$ smaller that $0$ or bigger than $m$ to obtain the triangle $AB'C'$ with odd $j'$ and odd $\mr{gcd}\{i',m'\}$. So also such a triangle may be considered up to isomorphism as a triangle with vertices $A = (0,0), B = (i,j)$ and $C = (m,0)$ with odd $j$ and odd
$\mr{gcd}\{i,m\}$.
However, such a triangle does not need to have the form of a hat $T_{i,j,m,0}$ with non-negative $i,j,m$, as defined above.
\end{remark}

Taking into account the last observation, we extend our notation of $T_{i,j,m,0}$ to the case
when $i$ is any integer. Note that also in this more general case, $T_{i,j,m,0}$ may be
considered (up to isomorphism) as a triangle with odd $j$ and odd $\mr{gcd}\{i,m\}$.

There is one more convenient possibility to describe coordinates of the vertices of hat
triangles, provided by the next proposition.
But first consider the following example and lemma.

\begin{example}\label{Ex:i1m}
The matrix
\begin{equation}
\begin{bmatrix}
1 & 0 \\
-i & 1
\end{bmatrix}
,
\end{equation}
transforms each triangle $T_{i,1,m,0}$ with vertices $A = (0,0), B = (i,1)$ and $C =
(m,0)$ onto the isomorphic right triangle $AB'C$ with vertices $A, B'= (0,1)$ and $C$.
If $m = m'2^k$, with odd $m'$, then $AB'C$ is isomorphic to $AB'C'$ with $C' = (m',0)$
\end{example}

\begin{lemma}\label{L:infisoms}
Consider a hat triangle $T_{i,j,m,0}$. Then each hat triangle $T_{l,j,m,0}$,  with $l = i + jk$ and $k \in \mbZ$, is isomorphic to $T_{i,j,m,0}$.
\end{lemma}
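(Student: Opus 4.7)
The plan is to produce an explicit $\mathbb{D}$-module automorphism of the affine plane $\mathbb{D}^2$ that fixes the two vertices on the $x$-axis while shifting the third vertex along the horizontal direction by a controlled integer multiple of $j$. Since isomorphisms between dyadic polygons are realized as restrictions of automorphisms of the ambient affine $\mathbb{D}$-hull (as recalled in Subsection~\ref{Ss:polytopes}), such an automorphism will yield the desired isomorphism $T_{i,j,m,0} \cong T_{l,j,m,0}$.

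First I would take the shear matrix
\[
M_k = \begin{bmatrix} 1 & 0 \\ k & 1 \end{bmatrix}
\]
already used in~\eqref{E:M} in the proof of Proposition~\ref{P:newclassif1}(b). Since $k \in \mathbb{Z} \subseteq \mathbb{D}$ and $\det M_k = 1$, the matrix $M_k$ lies in $\mathrm{GL}(2,\mathbb{D})$ and therefore defines an automorphism of the affine $\mathbb{D}$-space $\mathbb{D}^2$. Acting on row vectors on the right (the convention used e.g.\ in the proof of Lemma~\ref{L:onaxis}), one computes $(0,0)M_k = (0,0)$, $(m,0)M_k = (m,0)$, and $(i,j)M_k = (i+jk,\,j) = (l,j)$. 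Hence $M_k$ sends the vertex set $\{A,B,C\}$ of $T_{i,j,m,0}$ onto the vertex set $\{A, (l,j), C\}$ of $T_{l,j,m,0}$, and the restriction to the triangle is the required isomorphism.

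I do not foresee any real obstacle here; the only auxiliary point to mention is that $\det M_k = 1 > 0$, so by Remark~\ref{R:orient} the orientation of the vertices is preserved, which is consistent with both triangles being hat triangles lying in the upper halfplane. The argument uses only that $k$ is an integer (so that the entries of $M_k$ remain dyadic and $M_k^{-1}$ also has dyadic entries), which matches the hypothesis $k \in \mathbb{Z}$ in the lemma statement.
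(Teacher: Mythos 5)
Your proof is correct and is essentially the paper's own argument: the paper also uses the shear matrix $M_k$ with lower-left entry $k$ to map $T_{i,j,m,0}$ onto $T_{i+jk,j,m,0}$, merely without writing out the vertex computations you supply. The extra remarks about $\det M_k = 1$ and orientation are fine but not needed.
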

\begin{proof}
The automorphism of the space $\mbD^2$ given by the matrix
\begin{equation}\label{E:Mk}
M_k =
\begin{bmatrix}
1 & 0 \\
k & 1
\end{bmatrix},
\end{equation}
where $k \in \mbZ$,
maps the triangle $T_{i,j,m,0}$ onto the isomorphic triangle $T_{i+jk,j,m,0}$.
\end{proof}
Note that by Lemma~\ref{L:infisoms}, there are infinitely many triangles $T_{l,j,m,0}$
isomorphic to $T_{i,j,m,0}$.

\begin{remark}\label{R:infisoms}
By Lemma~\ref{L:transrefl}, the triangles $T_{i,j,m,0}$ and $T_{m-i,j,m,0}$ are
isomorphic. Indeed, if $T = ABC$ is the triangle $T_{i,j,m,0}$ with $A = (0,0), B = (i,j)$ and
$C = (m,0)$, then using the reflection in the $y$-axis and then the translation by the vector
$(m,0)$, one obtains the isomorphic triangle $T' = C'B'A'$, with $C' = (0,0), B' = (m-i,j)$ and
$A' = (m,0)$, which is the triangle $T_{m-i,j,m,0}$. Note that the isomorphism $T$ onto $T'$
reverses the order of vertices. If the boundary type of $T$ is $(r,s,t)$, then the boundary type of $T'$ is $(t,s,r)$.

Figure~\ref{F:2} illustrates three possible localizations $B_1, B_2, B_3$ of the vertex $B$ and $B'_1, B'_2, B'_3$ of the vertex $B'$, with the first respective coordinate negative, contained between $0$ and $m$, and bigger than $m$.

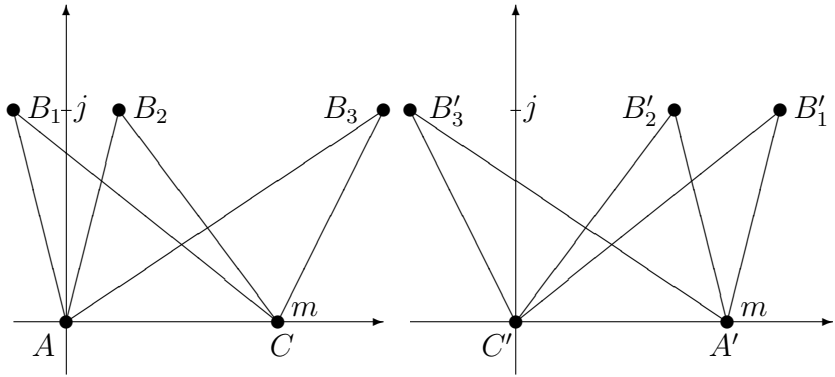
\begin{figure}[H]
	\begin{center}
		\begin{picture}(300,140)
			\put( 0,20){\vector(1,0){140}}
			\put(20,0){\vector(0,1){140}}
			\put(20,20){\circle*{5}}
			
			\put(100,20){\circle*{5}}
			\put(40,100){\circle*{5}}
			\put(20,20){\line(1,4){20}}
			\put(100,20){\line(-3,4){60}}
			\put(18,100){\line(1,0){4}}
			\put(140,100){\circle*{5}}
			\put(20,20){\line(3,2){120}}
			\put(140,100){\line(-1,-2){40}}
			\put(0,100){\circle*{5}}
			\put(20,20){\line(-1,4){20}}
			\put(100,20){\line(-5,4){100}}
			\put(7,7){$A$}
			\put(45,97){$B_2 $}
			\put(97,7){$C $}
			\put(105,22){$m$}
			\put(117,97){$B_3$}
			\put(5,97){$B_1$}
			\put(23,98){$j$}
			\put(150,20){\vector(1,0){160}}
			\put(190,0){\vector(0,1){140}}
			\put(190,20){\circle*{5}}
			\put(270,20){\circle*{5}}
			
			\put(250,100){\circle*{5}}
			\put(190,20){\line(3,4){60}}
			\put(270,20){\line(-1,4){20}}
			\put(188,100){\line(1,0){4}}
			\put(150,100){\circle*{5}}
			\put(190,20){\line(-1,2){40}}
			\put(150,100){\line(3,-2){120}}
			\put(290,100){\circle*{5}}
			\put(270,20){\line(1,4){20}}
			\put(190,20){\line(5,4){100}}
			\put(193,98){$j$}
			
			\put(177,7){$C'$}
			\put(275,22){$m$}
			\put(263,7){$A'$}
			\put(295,97){$B_1'$}
			\put(230,97){$B_2 '$}
			
			\put(157,97){$B_3'$}

		\end{picture}
	\end{center}
	\caption{The isomorphic hats $T_{i,j,m,0}$ and $T_{m-i,j,m,0}$.}
	\label{F:2}
\end{figure}

\bigskip
By Lemma~\ref{L:infisoms}, there are infinitely many triangles $T_{l,j,m,0}$ isomorphic to
$T_{m-i,j,m,0}$ with integral coordinates $l = m-i + jk, j, m$ for any integer $k$.
\end{remark}

\begin{proposition}\label{P:oddcoords}
Each triangle $T_{i,j,m,0}$ with odd $j$ and odd $\mr{gcd}\{i,m\}$
is isomorphic to a triangle $T_{i',j,m',0}$, where all $i', j, m'$ are
odd.
\end{proposition}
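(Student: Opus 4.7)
The plan is a case analysis on the parities of $i$ and $m$. The hypothesis that $\gcd\{i,m\}$ is odd rules out both $i$ and $m$ being even, so there are only three cases to consider.

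Two of these cases are straightforward. If $i$ and $m$ are both odd, there is nothing to prove. If $i$ is even and $m$ is odd, then since $j$ is odd the integer $i+j$ is odd, and Lemma~\ref{L:infisoms} applied with $k=1$ gives $T_{i,j,m,0} \cong T_{i+j,j,m,0}$, as required.

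The main obstacle is the remaining case, $i$ odd and $m$ even: the shearing of Lemma~\ref{L:infisoms} and the reflection--translation of Remark~\ref{R:infisoms} both preserve $m$, so reducing $m$ to its odd part genuinely requires a new automorphism. Write $m = 2^l m'$ with $l \geq 1$ and $m'$ odd. The strategy is to compose the scaling matrix $M_{-l,0}$ from Lemma~\ref{L:fracint} with a preliminary shear from Lemma~\ref{L:infisoms}. Naively scaling the $x$-coordinate by $2^{-l}$ sends $(m,0)$ to $(m',0)$ but sends $(i,j)$ to $(i/2^l, j)$, whose first coordinate is not an integer since $i$ is odd and $l \geq 1$. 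To compensate, first apply the shear $M_{c_0}$ of Lemma~\ref{L:infisoms} with an integer $c_0$ chosen so that $c_0 j \equiv -i \pmod{2^l}$; such $c_0$ exists because $j$ is odd, hence invertible modulo $2^l$. Explicitly the composition is
\[
M \;=\; \begin{bmatrix} 1 & 0 \\ c_0 & 1 \end{bmatrix} \begin{bmatrix} 2^{-l} & 0 \\ 0 & 1 \end{bmatrix} \;=\; \begin{bmatrix} 2^{-l} & 0 \\ c_0\, 2^{-l} & 1 \end{bmatrix},
\]
whose determinant $2^{-l}$ is a unit of $\mbD$; it sends $(0,0) \mapsto (0,0)$, $(m,0) \mapsto (m',0)$, and $(i,j) \mapsto \bigl((i+c_0 j)/2^l,\, j\bigr)$, all with integer coordinates by the choice of $c_0$. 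This yields an isomorphism $T_{i,j,m,0} \cong T_{i'',j,m',0}$ with $m'$ odd and the middle parameter $j$ preserved. If $i''$ turns out to be even, one further application of Lemma~\ref{L:infisoms} with $k=1$ replaces it by the odd integer $i''+j$, giving the required $T_{i',j,m',0}$ with $i',\, j,\, m'$ all odd.
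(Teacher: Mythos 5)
Your proof is correct, but in the key case it takes a genuinely different route from the paper's. The two easy cases (both $i,m$ odd; $i$ even and $m$ odd, handled by one shear $M_1$ from Lemma~\ref{L:infisoms}) coincide with the paper's Case~A. For the remaining case ($i$ odd, $m$ even) the paper argues by induction on $m$: it applies the matrix $N$ of \eqref{E:N}, which halves $m$ and sends $(i,j)$ to $((i+j)/2,j)$, interleaves shears by $M_1$ to restore oddness of the first coordinate, and treats $j=1$ and the degenerate sub-case $i+j=m$ (which collapses to a right triangle) separately. You instead perform the whole reduction in a single step: writing $m=2^l m'$ with $m'$ odd, you exploit that $j$, being odd, is invertible modulo $2^l$ to pick an integer $c_0$ with $c_0 j\equiv -i \pmod{2^l}$, and then compose the shear $M_{c_0}$ of Lemma~\ref{L:infisoms} with the diagonal scaling by $2^{-l}$ in the first coordinate (an instance of the matrix \eqref{E:kl}, invertible over $\mbD$ since its determinant is a power of $2$); this automorphism fixes $A=(0,0)$, sends $(m,0)$ to $(m',0)$ and $(i,j)$ to $\bigl((i+c_0j)/2^l,\,j\bigr)$ with integral first coordinate, and at most one further shear by $M_1$ makes all of $i',j,m'$ odd, which is all the statement requires since the notation $T_{i,j,m,0}$ is extended to arbitrary integral $i$. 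Both arguments use only the same two types of automorphisms (shears and diagonal $2$-power scalings), but yours replaces the paper's induction and case-splitting by one congruence argument, yielding a shorter and more explicit proof; the paper's halving procedure proceeds in elementary steps and, as a by-product, records how the construction meets the right-triangle case along the way.
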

\begin{proof}
Assume that $j$ and $\mr{gcd}\{i,m\}$ are odd.
The case $j = 1$ was considered in Example~\ref{Ex:i1m}. Now let $j > 1$.
One has to consider two cases, when precisely one of $i$ and $m$  is odd.

Case A. Assume that $i$ is even and $m$ is odd. Then the automorphism of the space $\mbD^2$
given by the matrix $M_1$ of Lemma~\ref{L:infisoms} maps the triangle $T_{i,j,m,0}$ onto the
isomorphic triangle $T_{i+j,j,m,0}$ with odd $i+j$.

Case B. Now assume that $m$ is even and $i$ is odd. The proof of the lemma goes by induction on $m$. For $m = 2$, the automorphism of the space $\mbD^2$ given by the matrix
\begin{equation}\label{E:N}
N =
\begin{bmatrix}
1/2 & 0 \\
1/2 & 1
\end{bmatrix}
\end{equation}
maps the triangle $T_{i,j,2,0}$ onto the isomorphic triangle $T_{i',j,1,0}$, where $i' =
(i+j)/2$. If $i'$ is even, then the automorphism given by the matrix $M_1$ will transform $(i',j)$ to $(i'',j)$ with odd $i'' = i' + j$ and keep the other vertices the same.

Now assume that the proposition holds for all $m < n$, and let $m = n$. The automorphism of
$\mbD^2$ given by the matrix $N$ maps the vertex $(m,0)$ to $(m/2,0)$, with $m/2 < n$,
and $(i,j)$ to $(i',j)$, where $i' = (i+j)/2$.

If $i+j = m$, then $i' = m/2$, and one obtains the right triangle $T_{m/2,j,m/2.0}$. If $m/2 =
m' 2^k$ with odd $m'$, then $T_{m/2,j,m/2,0}$ is isomorphic to $T_{m',j,m',0}$, and the last one to $T_{0,m',j,0}$ with odd $j$ and $m'$.

Assume that $i+j \neq m$. If $i'$ is even, then we use the matrix $M_1$ again to obtain the triangle with vertices $(0,0), (m/2,0)$ and $(i'+j,j)$ with odd $i'+j$. Since $m/2 < n$, the induction hypothesis implies that the last triangle is isomorphic to a triangle with the vertices having all non-zero coordinates odd.
\end{proof}

\begin{remark}\label{R:oddcoords}
Note that if all $i, j, m$ in $T_{i,j,m,0}$ are odd, then $j$ and $\mr{gcd}\{i,m\}$ are always
odd. On the other hand, all triangles $T_{i-kj,j,m,0}$, with odd $k$, are isomorphic to
$T_{i,j,m,0}$ with even $i-kj$. Finally note also that one can find a triangle $T_{i',j,m',0}$
with odd $\mr{gcd}\{i',m'\}$ and $0 \leq i' \leq m'$ isomorphic to $T_{i,j,m,0}$. Let
$T_{i,j,m,0}$ be a triangle $T = ABC$ with all $i, j, m$ odd. Then similarly as in the proof of Proposition~\ref{P:newclassif1}, we use the matrix~\eqref{E:M} to transform the triangle $T$ to the isomorphic triangle $AB'C$ with $B' = (i+jc,j)$ with $0 \leq i+jc \leq
m$. If $i+jc = n/2^k$ (with odd $n$) is not an integer, then we use the matrix $M_{k,0}$ to
transform $AB'C$ onto the isomorphic triangle $AB''C'$ with $B'' = (n,j)$ and $C' = (2^k m,0)$. Obviously $j$ and $\gcd\{n,2^k m\}$ are odd integers and $0 \leq n \leq 2^k m$.
\end{remark}

In what follows we use the name \emph{generalised hats} or briefly \emph{hats} for dyadic right or hat triangles $T_{i,j,m,0}$ with any integer $i$ and positive integers $j, m$, and sometimes \emph{right hats} in the case when $i = 0$ or $i = m$, \emph{proper hats} if $0 < i < m$ and \emph{crooked hats} in remaining cases.

A triangle $T_{i,j,m,0}$ will be called a \emph{representative hat} when all three $i, j$ and
$m$ are odd integers.
A representative hat $T_{i,j,m,0}$ will be denoted by $T_{i,j,m}$.

The following theorem is a direct consequence of Proposition~\ref{P:newclassif1},
Remark~\ref{R:rightshats} and Proposition~\ref{P:oddcoords}.

\begin{theorem}\label{T:reprhats}
Each dyadic triangle in the dyadic space $\mbD^2$ is isomorphic to a representative hat.
\end{theorem}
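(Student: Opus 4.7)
The plan is to chain together Proposition~\ref{P:newclassif1}, Remark~\ref{R:rightshats}, and Proposition~\ref{P:oddcoords} to reduce an arbitrary dyadic triangle, in stages, to a representative hat. Given an arbitrary dyadic triangle $T \subset \mbD^2$, I would first apply Proposition~\ref{P:newclassif1}(b) to obtain an isomorphic triangle $ABC$ with $A = (0,0)$, $C = (m,0)$ on the positive part of the $x$-axis, and $B = (i,j)$ in the first quadrant, all with integral coordinates and satisfying $0 \leq i \leq m$. This already realises $T$ as a triangle of the form $T_{i,j,m,0}$ in the extended sense introduced after Remark~\ref{R:rightshats}, covering both the right-triangle case ($i = 0$, or $i = m$ after a reflection from Lemma~\ref{L:transrefl}) and the proper-hat case ($0 < i < m$).

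Next, I would invoke Remark~\ref{R:rightshats} to transform $T_{i,j,m,0}$ into an isomorphic triangle of the same form in which $j$ and $\gcd\{i,m\}$ are both odd. This step uses the diagonal scaling matrices $M_{kl}$ of \eqref{E:kl} to strip off powers of $2$ from $j$ and from $\gcd\{i,m\}$, and is already carried out in full detail in the statements of Lemma~\ref{L:fracint} and Remark~\ref{R:rightshats}.

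At this point the hypotheses of Proposition~\ref{P:oddcoords} are met, so I would apply that proposition to obtain an isomorphic triangle $T_{i',j,m',0}$ in which all three of $i'$, $j$, $m'$ are odd. By the convention introduced immediately before the theorem, such a triangle is precisely a representative hat $T_{i',j,m'}$. Composing the isomorphisms produced at each of the three stages yields an isomorphism from $T$ to $T_{i',j,m'}$, which is the desired conclusion.

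The argument is essentially bookkeeping rather than genuinely new work, so there is no real obstacle; the one place to be careful is the right-triangle case $i = 0$, where one must check that Proposition~\ref{P:oddcoords} still applies. This is fine because $\gcd\{0,m\} = m$, so odd $m$ suffices, and Case~A in the proof of Proposition~\ref{P:oddcoords} explicitly replaces $(0, j)$ by $(j, j)$ via the matrix $M_1$ of Lemma~\ref{L:infisoms}, producing a triangle $T_{j,j,m,0}$ with all three coordinates odd.
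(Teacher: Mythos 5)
Your proposal is correct and follows essentially the same route as the paper, whose proof consists precisely of citing Proposition~\ref{P:newclassif1}, Remark~\ref{R:rightshats} and Proposition~\ref{P:oddcoords} in that order; you have merely spelled out the chaining explicitly. Your extra check of the right-triangle case $i=0$ (via Case~A of Proposition~\ref{P:oddcoords}, yielding $T_{j,j,m,0}$) is a sound and welcome verification that the paper leaves implicit (cf.\ Remark~\ref{R:rightrepr}).
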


\begin{remark}\label{R:6reprhats}
Proposition~\ref{P:newclassif1} implies that each dyadic triangle $ABC$ is isomorphic to three
representative hats with the same ordering of the vertices and three isomorphic hats with the
reverse ordering. They correspond to the six permutations of the vertices.
\end{remark}

\begin{remark}\label{R:rightrepr}
Note that the triangles $T_{i,j,m,0}$ and $T_{0,m,j,i}$ are isomorphic. Note, as well, the
following isomorphisms of right triangles: $T_{0,j,m,0}$ and $T_{0,m,j,0}$, then $T_{m,j,m}$ and $T_{0,j,m,0}$ and finally $T_{j,j,m}$ and $T_{0,j,m,0}$, where $j$ and $m$ are odd positive integers. Sometimes it is convenient to consider right triangles up to isomorphisms as triangles $T_{0,j,m,0}$ with $j, m$ odd and $j \leq m$, and denote them by $T_{j,m}$.
\end{remark}

\section{Characterization of pointed dyadic triangles}\label{S:5}

The next question of interest is when representative hats are isomorphic. In this section
we will consider representative hats as pointed triangles with a pointed vertex located at the
origin, and isomorphisms of representative hats as isomorphisms preserving the pointed vertex
and the orientation of vertices. This type of an isomorphism will be called a \emph{pointed
oriented isomorphism} or briefly a \emph{pointed isomorphism}. First note that by
Lemma~\ref{L:infisoms}, each representative hat $T_{i,j,m}$ is isomorphic to infinitely many
hats $T_{l,j,m,0}$ with $l = i + jk$, where $k$ is an integer.

The following proposition describes the class of representative hats pointed isomorphic to a given representative hat.

\begin{proposition}\label{P:hathat}
Consider two representative hats $T = ABC$ and $T' = A'B'C'$ with the vertices $A = A' = (0,0)$, $B = (i,j), C = (m,0)$ and $B' = (i',j'), C' = (m',0)$. Let $\iota: T \rightarrow T'$ be a mapping taking $B$ to $B'$ and $C$ to $C'$.

Then $\iota: T \rightarrow T'$ is a pointed oriented isomorphism precisely when $j' = j$, $m' = m$ and $i' = i + jk$ for some even integer $k$.
 \end{proposition}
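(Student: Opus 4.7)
The plan is to realize the isomorphism $\iota$ as a $\mbD$-linear map, read off constraints from the vertex conditions, and use the oddness built into representative hats to pin down the matrix entries. Since $\iota$ fixes the pointed vertex $A = A' = (0,0)$ and is the restriction of an affine automorphism of $\mbD^2$, it is in fact the restriction of a linear map given by some $M = \begin{bmatrix} a & b \\ c & d \end{bmatrix} \in \mathrm{GL}(2,\mbD)$. Orientation preservation forces $\det M > 0$ by Remark~\ref{R:orient}, and the two vertex conditions $(m,0)\,M = (m',0)$ and $(i,j)\,M = (i',j')$ give $b = 0$, $ma = m'$, $jd = j'$, and $ai + jc = i'$.

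The forward direction then rests on the oddness inherent in representative hats. The key elementary observation I would use is: if $x \in \mbD$ and $nx$ is an odd integer for some odd integer $n$, then $x$ itself is an odd integer (write $x = p/2^s$ with $p$ odd and note that $np$ is odd, forcing $s = 0$). Applied to $ma = m'$ and $jd = j'$, this forces $a$ and $d$ to be odd positive integers. Their product $\det M = ad$ is then an odd positive integer; but $\det M$, being a positive unit of $\mbD$, equals $2^{\ell}$ for some $\ell \in \mbZ$, so $a = d = 1$. Hence $m' = m$, $j' = j$, and the remaining equation reads $i' = i + jc$. Applying the same observation to $j\cdot c = i' - i \in \mbZ$ with $j$ odd forces $c$ to be an integer. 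Finally, since $i$ and $i'$ are both odd, their difference is even, and with $j$ odd this forces $k := c$ to be even.

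For the converse, given $j' = j$, $m' = m$, and $i' = i + jk$ with $k$ even, I would invoke Lemma~\ref{L:infisoms}: the matrix $M_k$ has determinant $1 > 0$, fixes $A$, and sends $C \mapsto C'$ and $B = (i,j) \mapsto (i + jk, j) = B'$; its restriction to $T$ is then a pointed oriented isomorphism onto $T'$. The main subtlety in the whole argument is the parity of $k$: it does not fall out of the matrix equations themselves, but only from imposing representativeness (oddness of $i$ and $i'$) on both triangles at the very end.
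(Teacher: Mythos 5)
Your proof is correct, but it reaches the conclusions $m'=m$ and $j'=j$ by a genuinely different route than the paper. The paper first gets $m'=m$ from invariance of the side (interval) types under isomorphism, and $j'=j$ from the area invariant of Proposition~\ref{P:area} (the ratio of areas is a power of $2$, and $mj$, $mj'$ are odd), and only then solves the matrix equation, which with $m'=m$, $j'=j$ forces $M$ to be the unipotent matrix with lower-left entry $(i'-i)/j$; the parity argument for $k$ and the converse via the matrix $M_k$ of Lemma~\ref{L:infisoms} are then the same as yours. You instead keep all four entries of $M$ unknown, read off $b=0$, $ma=m'$, $jd=j'$, $ia+jc=i'$ from the vertex conditions, and use two elementary facts -- that an odd integer times a dyadic number can be an odd integer only if that dyadic number is an odd integer, and that determinants of dyadic automorphisms lie in $\pm2^{\mathbb{Z}}$ -- to force $a=d=1$, which yields $m'=m$ and $j'=j$ without invoking side types or Proposition~\ref{P:area}. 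This is a more self-contained argument (it even shows that orientation preservation is automatic here, since $a=m'/m>0$ and $d=j'/j>0$ give $\det M=ad>0$), at the cost of redoing by hand what the paper gets for free from previously established invariants; both correctly isolate the one subtle point, namely that the evenness of $k$ comes only from the oddness of $i$ and $i'$ in representative hats.
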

\begin{proof}
First assume that $\iota$ is a pointed isomorphism.
Since any isomorphism between two triangles preserves the side types and both $T$ and $T'$  are representative, it follows immediately that $m = m'$. Then by Proposition~\ref{P:area}, the areas $P$ of the convex $\mbR$-hull $\mr{conv}_{\mbR}(T)$ of $T$ and $P'$ of the convex
$\mbR$-hull $\mr{conv}_{\mbR}(T')$ of $T'$ are equal. Hence $mj = mj'$, and consequently $j =
j'$.

The mapping $\iota$ is an isomorphism preserving the pointed vertex and the orientation
precisely when there exists an invertible dyadic matrix $M$ such that
\begin{equation}
\begin{bmatrix}
m&0\\
i&j
\end{bmatrix}
M =
\begin{bmatrix}
m'&0\\
i'&j'
\end{bmatrix}
=
\begin{bmatrix}
m&0\\
i'&j
\end{bmatrix}
.
\end{equation}

It follows that
\begin{equation}
M = \frac{1}{mj}
\begin{bmatrix}
	j&0\\
	-i&m
\end{bmatrix}
\begin{bmatrix}
	m&0\\
	i'&j
\end{bmatrix}
=
\begin{bmatrix}
	1&0\\
	(-i+i')/j&1
\end{bmatrix}
.\end{equation}

The number $k := (i' - i)/j$ is a dyadic number precisely when $j$ divides $i' - i$. Since $i,
i', j$ are integers, it follows that $k$ is an integer too. And since $i'$ is odd, $k$ must be
even. Then $i' - i = jk$ for some even integer $k$.

Now assume that $j = j'$, $m = m'$ and $i' - i = jk$ for some even integer $k$. Then the matrix $M$ provides a pointed isomorphism from $T$ to $T'$.
\end{proof}

Let us note that the areas of the convex $\mbR$-hulls of all triangles $AB''C$, where $B''
= (i'',j)$ and $i''$ is any integer, are all equal to $mj/2$. However, not all these triangles
are pairwise isomorphic.

\begin{remark}\label{R:hathat}
Consider the representative hat $T_{i,j,m}$. Recall that the representative hat $T_{i,j,m} =
T_{i,j,m,0}$ and the hat $T_{m-i,j,m,0}$ are isomorphic. (Cf.~Remark~\ref{R:infisoms}. Recall that if the hat $T_{i,j,m}$ is $ABC$, then the hat $T_{m-i,j,m,0}$ is $C'B'A'$.) Since
$m-i$ is even, $T_{m-i,j,m,0}$ is not representative. Using the matrix $M_k$ of
Lemma~\ref{L:infisoms} with an odd $k$, one easily shows that $T_{m-i,j,m,0}$ is isomorphic to
the representative hat $T_{m-i+kj,j,m}$ with odd $m-i+kj$. In particular, $T_{m-i,j,m,0}$ is
isomorphic to the representative hat $T_{m-i+j,j,m}$. Applying Proposition~\ref{P:hathat} one
shows that $T_{m-i+kj,j,m}$ and $T_{l,j',m'}$ are isomorphic (with the pointed vertex and the
orientation preserved) precisely when $j' = j$, $m' = m$ and $l = (m-i+kj) + jk'$ for some even integer $k'$ or equivalently $l = m-i + jk''$ for some odd integer $k''$. Finally note that if $T_{i,j,m}$ is $ABC$, $T_{m-i,j,m,0}$ is $C'B'A'$, then $T_{m-i+jk'',j,m}$ is $C''B''A''$.
\end{remark}

\begin{corollary}\label{C:nonisomhats}
For any positive odd integers $j$ and $m$, there are $j$  (pointed) isomorphism classes of
representative hats $T_{i,j,m}$. Each class is represented by a unique $T_{i,j,m}$, where $i \in
\{1,3,\dots, 2j-1\}$.
\end{corollary}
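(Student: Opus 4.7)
The plan is to invoke Proposition~\ref{P:hathat} and reduce the problem to counting odd residue classes modulo $2j$. Fix positive odd integers $j$ and $m$. By definition, a representative hat $T_{i,j,m}$ requires $i$ to be an odd integer. Proposition~\ref{P:hathat} characterizes pointed oriented isomorphism between two representative hats $T_{i,j,m}$ and $T_{i',j',m'}$ as the conjunction $j'=j$, $m'=m$, and $i'=i+jk$ for some even integer $k$. With $j$ and $m$ fixed, the last condition is equivalent to the congruence $i'\equiv i \pmod{2j}$. Thus the set of pointed isomorphism classes of representative hats $T_{i,j,m}$ is in bijection with the set of equivalence classes of odd integers modulo $2j$.

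The next step is to show that $S:=\{1,3,\dots,2j-1\}$ is a complete system of representatives for the odd integers modulo $2j$. For existence of a representative, given any odd integer $i$, write $i=2jq+r$ with $0\le r<2j$; since $i$ is odd and $2jq$ is even, $r$ is odd, so $r\in S$. For uniqueness, if $i,i'\in S$ and $i\equiv i'\pmod{2j}$, then $|i-i'|<2j$ forces $i=i'$. Hence $S$ has exactly $j$ elements and each equivalence class meets $S$ in exactly one point.

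Combining these two steps yields the corollary: there are exactly $j$ pointed isomorphism classes of representative hats $T_{i,j,m}$, and each class is represented by a unique $i\in\{1,3,\dots,2j-1\}$. There is no real obstacle here; the only subtlety worth flagging is that Proposition~\ref{P:hathat} already encodes the full freedom of a pointed orientation-preserving affine isomorphism (the labels $B\to B'$, $C\to C'$ are forced by fixing the pointed vertex $A$ and preserving orientation), so no additional identifications beyond the congruence $i'\equiv i\pmod{2j}$ can occur.
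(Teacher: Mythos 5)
Your proposal is correct and is essentially the argument the paper intends: the corollary is an immediate consequence of Proposition~\ref{P:hathat}, since $i'=i+jk$ with $k$ even is exactly the congruence $i'\equiv i \pmod{2j}$, and the odd residues modulo $2j$ are precisely $\{1,3,\dots,2j-1\}$, giving $j$ classes. The paper gives no separate proof, and your residue-counting fills in exactly the step it leaves implicit.
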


If $i'$ in Proposition~\ref{P:hathat} equals $0$, then the hat $AB'C$ is a right hat. In this
case, one obtains the following corollary.

\begin{corollary}\label{R:hatright}
Let $T$ be a hat as in Proposition~\ref{P:hathat}. Then
$T$ is pointed isomorphic to the right hat $T'$ precisely when
$j$ divides $i$.
\end{corollary}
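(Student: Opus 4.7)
The plan is to reuse the matrix computation from the proof of Proposition~\ref{P:hathat}, specialised to the case $i' = 0$. The statement of Proposition~\ref{P:hathat} was for two representative hats, but its proof is purely a linear-algebraic determination of the transition matrix, and that determination carries over when we relax $T'$ to be a right hat (so that $i' = 0$ is even rather than odd). The only change is that the parity constraint on $k = (i' - i)/j$ disappears; what remains is the requirement that $-i/j$ be dyadic.

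First I would establish sufficiency. Suppose $j \mid i$, and write $i = jq$ with $q \in \mbZ$. The matrix $M_{-q}$ from Lemma~\ref{L:infisoms} (with $k = -q$) has determinant $1$, hence lies in $\mathrm{GL}(2,\mbD)$ and induces an orientation-preserving $\mbD$-module automorphism of $\mbD^2$ fixing the origin. A direct computation shows it fixes $A = (0,0)$ and $C = (m, 0)$, and sends $B = (jq, j)$ to $(0, j)$. So this automorphism restricts to a pointed oriented isomorphism from $T$ onto the right hat $T_{0, j, m, 0}$, giving one direction.

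Conversely, suppose $T$ admits a pointed oriented isomorphism $\iota$ onto a right hat $T' = T_{0, j', m', 0}$; by Remark~\ref{R:rightrepr} we may take $j', m'$ odd. The argument of Proposition~\ref{P:hathat} then applies verbatim: preservation of side types forces $m = m'$, Proposition~\ref{P:area} combined with the oddness of $j$ and $j'$ forces $j = j'$, and $\iota$ is realised by an invertible dyadic matrix $M$ satisfying the same linear equation as in Proposition~\ref{P:hathat} but with the right-hand side having a zero $(2,1)$-entry. Solving uniquely yields
\[
M = \begin{bmatrix} 1 & 0 \\ -i/j & 1 \end{bmatrix},
\]
whose determinant is $1$, so dyadic invertibility reduces to the entry $-i/j$ being dyadic; since $j$ is odd this is equivalent to $j \mid i$.

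The main step, such as it is, is simply observing that the derivation in Proposition~\ref{P:hathat} does not genuinely need $T'$ to be representative: the odd-integer hypotheses on $m'$ and $j'$ enter only through the side-type and area arguments, both of which can be preserved by choosing $T'$ as a representative right hat via Remark~\ref{R:rightrepr}. There is no real obstacle beyond this bookkeeping; no new geometric idea is required, and the earlier paragraph noting ``if $i' = 0$ in Proposition~\ref{P:hathat}'' already signals that the proof is the specialisation just described.
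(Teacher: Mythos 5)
Your proposal is correct and matches the paper's intent: the paper offers no separate argument for this corollary, presenting it as exactly the specialisation $i'=0$ of Proposition~\ref{P:hathat} (with sufficiency visible via the matrix $M_k$ of Lemma~\ref{L:infisoms}), which is precisely what you carry out. Your observation that the parity constraint on $k$ drops out because $i'=0$ is even, leaving only dyadicity of $-i/j$ and hence $j\mid i$ by oddness of $j$, is the right bookkeeping.
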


\begin{example}\label{Ex:isohats}
Consider two representative hats $T_{1,3,19}$ and $T_{7,3,19}$ of the same boundary type
$(1,3,19)$. Since $3$ divides $7-1 = 6$, the triangles are pointed isomorphic.
Now the hat $T_{1,3,19}$ is isomorphic (though not pointed isomorphic) to $T_{19-1,3,19,0}$, and to the representative hat $T_{21,3,19}$.
Since $3$ divides $21$, they both are isomorphic to the right triangle $T_{0,3,19,0}=T_{3,19}$.
\end{example}

\begin{example}\label{Ex:nisohats}
On the other hand, the representative hats $T_{1,5,29}$ and $T_{7,5,29}$ of the same
boundary type $(1,1,29)$ are not pointed isomorphic, since $5$ does not divides $7-1 = 6$.
\end{example}

The final part of this section is devoted to a characterization of pointed dyadic triangles.

For a dyadic triangle $T$ with clockwise ordered vertices and a pointed vertex, let $\mcR(T)$ be the class of all representative hats isomorphic to $T$ with the same pointed vertex and the same orientation of vertices.
By Corollary~\ref{C:nonisomhats}, each member of $\mcR(T)$ is isomorphic to a representative hat $T_{i,j,m}$, where
$j$ and $m$ are positive odd integers, and $i$ belongs to $\{1, 3, \dots, 2j-1\}$.
On the other hand, if $(i,j,m)$ is a triple of odd integers, where $j$ and $m$ are odd and $i \in \{1, 3, \dots, 2j-1\}$, then clearly it determines the representative hat $T_{i,j,m}$ and the class of hats $T_{i+jk,j,m}$, with even $k$, (pointed) isomorphic to $T_{i,j,m}$. We call such a triple an \emph{encoding triple}.

The following theorem characterises pointed dyadic triangles, and provides a correction to the statement of Theorem~$6.6$ of ~\cite{MRS11}.

\begin{theorem}\label{T:charactdyadtrs}
Two pointed dyadic triangles with the same orientation of vertices are isomorphic if and only if they have the same encoding triples.
\end{theorem}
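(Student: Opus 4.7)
The plan is to reduce the theorem to the claim that every pointed dyadic triangle with a specified orientation has a well-defined encoding triple. Once this is in place, both directions of the equivalence follow from transitivity of pointed oriented isomorphism: if $T$ and $T'$ share an encoding triple $(i,j,m)$, both are pointed-oriented-isomorphic to $T_{i,j,m}$ and hence to each other; conversely, any pointed oriented isomorphism between them forces the encoding triples to coincide by uniqueness.

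For existence, I would revisit the constructions underlying Theorem~\ref{T:reprhats} while tracking orientation. Starting from $T$ with pointed vertex $A$ and prescribed orientation, translate $A$ to the origin; the remaining steps from Proposition~\ref{P:newclassif1}, Corollary~\ref{C:fracint}, Lemma~\ref{L:onaxis}, and Proposition~\ref{P:oddcoords} assemble into a composition of origin-fixing linear maps drawn from an explicit short list: the diagonal scaling $M_{kl}$ of~\eqref{E:kl}, the matrix $M_{ax}$ of Lemma~\ref{L:onaxis}, the shearing~\eqref{E:M}, the halving matrix $N$ of~\eqref{E:N}, and the shearing $M_k$ of Lemma~\ref{L:infisoms}. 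Each of these has strictly positive determinant, so the composition is a pointed orientation-preserving isomorphism onto some representative hat $T_{i,j,m}$. A concluding application of $M_k$ with even $k$ then reduces $i$ modulo $2j$ to its unique representative in $\{1,3,\dots,2j-1\}$, while preserving both the pointed vertex and orientation.

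Uniqueness of the triple is immediate from Corollary~\ref{C:nonisomhats}: if $T$ were pointed-oriented-isomorphic to two representatives $T_{i,j,m}$ and $T_{i',j',m'}$ with $i,i'$ in the canonical range, transitivity would furnish a pointed oriented isomorphism between them, forcing $(i,j,m) = (i',j',m')$. The main obstacle is the orientation bookkeeping in the existence step: several reductions used in proving Theorem~\ref{T:reprhats}, notably Lemma~\ref{L:transrefl} and parts of Remark~\ref{R:rightshats}, invoke reflections in the coordinate axes or in $y = \pm x$, which reverse orientation. One must therefore avoid those reflection-based normalisations --- such as enforcing $j \leq m$ or passing to the right-hat form $T_{0,j,m,0}$ --- and instead let the freedom in choosing $i$ modulo $2j$ absorb the analogous adjustments. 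Once this is checked, the theorem follows as outlined in the first paragraph.
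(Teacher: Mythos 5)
Your proposal is correct and follows essentially the paper's (implicit) argument: the theorem is exactly the combination of an orientation- and pointedness-preserving refinement of Theorem~\ref{T:reprhats} with Proposition~\ref{P:hathat} and Corollary~\ref{C:nonisomhats}, applied via transitivity in both directions. Your orientation bookkeeping is the right check --- every matrix used in the reductions of Propositions~\ref{P:newclassif1} and~\ref{P:oddcoords} has positive determinant and fixes the pointed vertex, and the reflection-based normalisations are only needed for the older canonical forms (e.g.\ enforcing $j\leq m$ or the right-hat form $T_{0,j,m,0}$), not for reaching a representative hat --- so there is no gap.
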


\end{document}